\documentclass{amsart}
\usepackage{amssymb,graphicx,float}
\usepackage{pinlabel}
\usepackage{palatino}
\usepackage{longtable}
\usepackage[dvipsnames,svgnames,table]{xcolor}
\usepackage[textwidth=6.5in,textheight=8.5in,left=1in,top=1in]{geometry}
\usepackage{thmtools}
\usepackage{thm-restate}

\newtheorem{theorem}{Theorem}
\newtheorem{lemma}[theorem]{Lemma}
\newtheorem{prop}[theorem]{Proposition}
\newtheorem{corollary}[theorem]{Corollary}

\theoremstyle{definition}
\newtheorem{example}{Example}
\newtheorem{definition}{Definition}
\newtheorem{remark}{Remark}

\usepackage[hyperfootnotes=false]{hyperref}
\hypersetup{
  colorlinks,
  citecolor=Purple,
  linkcolor=Black,
  urlcolor=Green}
\begin{document}

\title{Ascending, Bridge, and Unknotting Numbers of Knotted Graphs}

\author[Acu]{Bahar Acu}
\address{Department of Mathematics \\ Claremont Colleges: Pitzer College \\ Claremont \\ CA \\ USA}
\email{baharacu@gmail.com}

\author[Pongtanapaisan]{Puttipong Pongtanapaisan}
\address{Department of Mathematics \\ Claremont Colleges: Pitzer College \\ Claremont \\ CA \\ USA}
\email{puttip@pitzer.edu}

\author[Singhal]{Ritikaa Singhal}
\email{ritikaa.singhal@gmail.com}

\subjclass[2020]{57M15}
\keywords{Virtual spatial graphs, ascending number, bridge number, crossing change}

\begin{abstract}
In this paper we define the ascending number and bridge number for virtual and welded spatial graphs. We prove that the former admits a lower bound in terms of the latter. This relationship allows us to compare different notions of unknotting virtual spatial graphs by crossing changes.
\end{abstract}

\maketitle

\section{Introduction}
Topological methods have long been used to study entanglement in biological systems. In DNA topology, knots and links record the topological state of closed DNA molecules, and local strand-passage events provide a mathematical model for the action of enzymes such as topoisomerases and recombinases. Tangle-theoretic models of site-specific recombination and grid-diagram models of topoisomerase-mediated simplification make this relationship especially explicit \cite{ErnstSumners1990,BarbensiEtAl2020}. Topological complexity also appears in proteins, where knots, links, and related spatial structures arise in the study of protein conformations and folding pathways \cite{FlapanHeller2015}. These examples motivate the systematic study of local operations, such as crossing changes, and of the numerical invariants that measure the complexity of simplifying a diagram by such operations.\smallskip

In classical knot theory, crossing changes give a standard algorithm for unlinking. Given a based, ordered, oriented diagram, one traverses the components in order and changes precisely those crossings that are first encountered as under-crossings. The resulting diagram is descending, and the classical ``roller-coaster trick'' shows that every descending classical diagram represents the unlink \cite{Adams2004,Ozawa2010}. Ozawa formalized this construction through the ascending number \(a(L)\), the minimum number of crossing changes needed to transform a based, ordered, oriented diagram of \(L\) into a descending diagram \cite{Ozawa2010}. Thus, for classical links, the ascending number gives an upper bound for the unlinking number. Related diagrammatic quantities appear in the work of Shimizu under the name warping degree \cite{Shimizu2010,Shimizu2011}. \smallskip

Classical spatial graph theory extends knot theory by replacing embedded circles with embedded graphs in $S^3$ or $\mathbb{R}^3$. Virtual spatial graphs provide a further extension of this viewpoint. Virtual spatial graph theory combines Kauffman's virtual knot theory with spatial graph theory, and it can be interpreted topologically in terms of graphs embedded in thickened surfaces, considered up to stabilization \cite{Kauffman1999,FlemingMellor2007,Kuperberg2003,DengJinKauffman2022}. This perspective is useful not only as a diagrammatic extension of classical spatial graph theory, but also for studying knotted graph-like objects with open ends arising in models of proteins \cite{GugumcuKauffmanPongtanapaisan2024}. Welded link theory, obtained by allowing the over-forbidden move, has a different but equally important topological interpretation: Satoh's Tube map relates welded links to ribbon torus-knots in four-space, and recent work extends this construction from welded graphs to ribbon surface-links \cite{Satoh2000,AudouxMeilhanYasuhara2025}. These connections motivate studying diagrammatic invariants that apply uniformly to classical, virtual spatial graphs.\smallskip

In the welded category, the classical roller-coaster mechanism survives in the one-component case: Satoh proved that crossing changes form an unknotting operation for welded knots \cite{Satoh2018}. Equivalently, a descending welded knot diagram represents the welded unknot. This statement, however, should not be interpreted as a statement about all virtual spatial graphs. Already the virtual Hopf link gives a descending two-component welded link diagram, after ordering the overpassing component first, which is not welded equivalent to the unlink. This is because its ordered virtual linking numbers are $v\ell k(J,K)=1$ and $v\ell k(K,J)=0$ whereas both ordered virtual linking numbers vanish for the unlink. Since ordered virtual linking numbers are welded invariants, this gives a descending welded link which is not the welded unlink. In the virtual category, descending virtual knot and link diagrams also need not represent trivial objects, so the ascending number is not simply an unlinking algorithm. Nevertheless, descending virtual diagrams still form a special and comparatively tractable class.\smallskip

The goal of this paper is to demonstrate that a naive way of unknotting may not be as efficient as possible. We accomplish this by bounding the ascending number from below in terms of the bridge number, which is an invariant we discuss in Section \ref{sec:prelims}.

\begin{restatable}{theorem}{mainbridgebound} \label{thm:mainbridgebound} Let \(G\) be an \(n\)-component virtual or welded spatial graph, \(a(G)\) denote the ascending number of \(G\), and \(bridge(G)\) denote its bridge number. Then \[ a(G) \geq bridge(G)-n. \] \end{restatable}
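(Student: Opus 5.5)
The strategy is to take a diagram realizing $a(G)$ and convert the crossing changes into local maxima. Fix a based, ordered, oriented diagram $D$ of $G$ (for spatial graphs, with a chosen traversal order of the edges in each component) in which exactly $a(G)$ crossings must be changed to obtain a descending diagram. We then build a diagram of $G$ whose number of bridges, in the sense defined in Section~\ref{sec:prelims}, is at most $a(G)+n$. Here a bridge is a maximal overpassing arc, or equivalently the count of local maxima after placing the diagram in height-function position; I will use whichever formulation the preliminaries adopt. Since $bridge(G)$ is a minimum over diagrams, it follows that $bridge(G)\le a(G)+n$, which is the claimed inequality.

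\textbf{Step 1: descending diagrams have small bridge number.} First treat a descending diagram $D'$. Parametrize each component by traversal from its basepoint. Assign heights that decrease strictly along the traversal and decrease from each component to the next in the order. Because $D'$ is descending, every classical crossing is then realized with the earlier strand above, so the height assignment is consistent with the crossing information. Virtual crossings impose no constraint at all. Each component then has a single local maximum at its basepoint and descends from there, and the only ascent is a return to the basepoint, which we can push under everything. Hence each component contributes one bridge, so $D'$ has at most $n$ bridges. For graph components, traverse a spanning walk, or use the order on the edges and vertices fixed in the definition of $a(G)$, so that each component still has a single maximum.

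\textbf{Step 2: each ascending crossing costs one extra maximum.} Now return to $D$. At each of the $a(G)$ crossings where $D$ differs from its descending modification, the later strand must pass over the earlier one. Locally, we insert a small bump (a new local maximum) on the over-strand at that crossing, so the strand rises above the earlier strand and then descends again. Away from the bump the descending height function is unchanged. This raises the bridge count by exactly one per changed crossing, for a total of at most $n+a(G)$ bridges for a diagram of $G$. Since the bridge number is defined as a minimum over virtual diagrams, and welded diagrams are a quotient of these, the bound holds in both categories.

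\textbf{Main obstacle.} The hardest part is making Step~2 rigorous for graphs, where vertices of valence at least three break the linear traversal order. The ascending number must then be defined through an ordering of edges, and a single basepoint per component may not give a single maximum, since branching at vertices could force extra maxima. I would first try to show that, in the definition of bridge number for spatial graphs, vertices may be placed as maxima, or that they are not counted. Then one maximum per component suffices, with all edges descending out of a vertex ordered by the traversal. If that fails, I would adjust the count so that the $n$ is replaced by the number of base maxima the definition allows per component.
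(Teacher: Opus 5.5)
Your overall count ($n$ bridges for the base, plus at most one more for each crossing first met as an undercrossing) is the same as the paper's. As written, however, the proposal does not prove the theorem for spatial graphs, which is its real content. You leave the vertex problem open, and your fallback of replacing $n$ by ``the number of base maxima'' would change the statement.

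The missing idea is already built into the definition of $a(G)$. In a based diagram $(D,T)$:
\begin{itemize}
\item the maximal forest $T$ contains every vertex and has exactly $n$ components;
\item $D_T$ carries no classical crossings;
\item only the non-tree edges $e_1,\dots,e_m$ are ordered and oriented, and each of them starts and ends at a vertex of $T$.
\end{itemize}
The paper simply regards each component of $D_T$ as one initial overpass and then traverses $e_1,\dots,e_m$ in order. In your language: lay $T$ flat at the top, and let each $e_i$ drop immediately into its own height band, with the bands decreasing in $i$. Inside its band, $e_i$ descends monotonically along its orientation, then rises vertically to its terminal vertex. Condition (1) of the descending definition is the monotone descent inside a band, and condition (2) is the ordering of the bands. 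Vertices then create no extra maxima, your Step 2 bumps handle the remaining crossings, and the constant $n$ comes out exactly. No spanning walk and no convention about counting vertices is needed.

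The second gap is the passage from heights to the bridge number actually used here, which is the minimum number of overpasses over diagrams. A virtual or welded diagram has no ambient $\mathbb{R}^3$. So ``push the return arc under everything'' and ``insert a bump'' are not operations you can perform, and ``equivalently the count of local maxima'' is an unproved assertion (even in the thickened-surface model it would need an argument).

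The two counts genuinely differ on a fixed diagram. Take the descending code $\mathrm{O}1\,\mathrm{O}2\,\mathrm{U}1\,\mathrm{O}3\,\mathrm{U}2\,\mathrm{U}3$. It has one component and one maximum in your height picture, but two overpasses: the strand from $\mathrm{U}3$ to $\mathrm{U}1$ through $\mathrm{O}1,\mathrm{O}2$, and the strand from $\mathrm{U}1$ to $\mathrm{U}2$ through $\mathrm{O}3$. Classically you bridge this by an isotopy of $\mathbb{R}^3$. Here you must instead produce, by virtual (resp.\ welded) moves, a diagram with at most $n+a(G)$ overpasses, or else argue combinatorially. The paper avoids heights and phrases the count directly in terms of overpasses met along the traversal of $(D,T)$. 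The example shows that any such argument must pass to a different diagram rather than count overpasses of $D$ itself.

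Your traversal does give one rigorous statement at once. The strands containing the components of $D_T$, together with the strand following each crossing first met as an undercrossing, form a Wirtinger generating set, so $\omega(G)\le n+a(G)$. For virtual and welded links, $\omega=bridge$ by \cite{Pongtanapaisan2019}, which settles that case. For virtual spatial graphs, the paper only records $\omega\le bridge$.

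Finally, ``welded diagrams are a quotient of virtual ones'' is not available for graphs. In the welded theory a strand moved under a vertex acquires virtual crossings (moves (III$'$), (IV$'$)), so the construction has to be carried out with welded moves directly.
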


It will be evident that interpreting the bridge number in terms of diagram coloring is useful \cite{BlairKjuchukovaVelazquezVillanueva2020,Pongtanapaisan2019,blackwell2026bridge}. The following theorem gives examples showing that unknotting a spatial graph by first making its diagram descending can be highly inefficient. We focus on almost unknotted spatial graphs, meaning that every proper subgraph is trivial. This condition makes the result more significant because spatial graphs with large bridge number can otherwise be constructed in a relatively simple way. For example, one may begin with a nontrivial knot diagram $D$ and add an edge whose interior is disjoint from $D$, thereby obtaining a $\theta$-curve containing the original knot as a constituent knot. The $\theta$-curve $3_1$ in Moriuchi's table arises in this way. Consequently, a straightforward method for producing a $\theta$-curve with large bridge number is to start with a knot of large bridge number and add such an edge. By contrast, we are interested in examples whose complexity is genuinely global and cannot be attributed to a single complicated proper subgraph.

\begin{restatable}{theorem}{arbitrarilylarge} \label{thm:arbitrarilylarge}
\begin{enumerate}
    \item[(a)] For any nonnegative integer $n$, there exists an almost
    unknotted $\theta$-curve $G$ satisfying
    \[
        a(G)=2n
        \qquad\text{and}\qquad
        a(G)-u(G)\ge n.
    \]

    \item[(b)] There exists a family of almost unknotted handcuff graphs
    $\{H_n\}_{n\in\mathbb N}$ with
    \[
        u(H_n)=1
        \qquad\text{and}\qquad
    \lim_{n\rightarrow \infty} a(H_n)-u(H_n) =\infty.
    \]
\end{enumerate}
\end{restatable}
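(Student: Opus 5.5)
The plan is to build explicit families and use Theorem~\ref{thm:mainbridgebound} for the lower bounds on $a$, with explicit crossing changes giving the upper bounds on $u$ and on $a$. For a $\theta$-curve or a handcuff graph, $n=1$ component, so Theorem~\ref{thm:mainbridgebound} reads $a(G)\ge bridge(G)-1$. For each family we will therefore (i) draw a based, ordered diagram, (ii) compute the bridge number from below by a coloring argument, (iii) exhibit a few crossing changes that trivialize the graph, bounding $u$ from above, and (iv) for part (a), exhibit a descending-making set of $2n$ crossing changes on some based diagram, bounding $a$ from above.

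For (a), I would take an almost unknotted $\theta$-curve $\theta_1$ that is genuinely knotted, such as the Kinoshita $\theta$-curve or a virtual analogue. I would then form an $n$-fold vertex connected sum (or a similar iterated clasping construction) $G_n$, chosen so that:
\begin{itemize}
\item every proper subgraph stays trivial, which holds because each constituent knot of a vertex sum of almost unknotted pieces is a connected sum of unknots;
\item each summand contributes $2$ to $bridge(G_n)$ beyond the base value;
\item each summand can be made descending with exactly $2$ crossing changes, but unknotted with at most $1$ crossing change.
\end{itemize}
Almost unknottedness makes the lower bound nontrivial, since the bridge number cannot be read off from a constituent knot. For the lower bound $bridge(G_n)\ge 2n+1$ I would use the coloring interpretation from Section~\ref{sec:prelims}. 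One finds a Fox-type (or virtual/biquandle) coloring of the diagram whose image in the target group needs many generators, so that every diagram needs at least $2n+1$ bridges, since bridges generate the coloring. Then Theorem~\ref{thm:mainbridgebound} gives $a(G_n)\ge 2n$, and the explicit diagram gives $a(G_n)\le 2n$. The same diagram shows $n$ crossing changes unknot $G_n$, so $u(G_n)\le n$ and $a-u\ge n$. For $n=0$ take the trivial $\theta$-curve.

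For (b), I would take handcuff graphs $H_n$ whose two loops are unknotted and split individually, but whose connecting edge is tied through a clasp region built from a twisted or iterated pattern. The design should make one crossing change at the central clasp release everything, so that $u(H_n)=1$; nontriviality gives $u(H_n)\ge1$. The complexity must then be spread out so that the coloring argument again yields $bridge(H_n)\to\infty$. Theorem~\ref{thm:mainbridgebound} then gives $a(H_n)\ge bridge(H_n)-1\to\infty$, and hence $a(H_n)-u(H_n)\to\infty$. Almost unknottedness must be checked separately: deleting the connecting edge gives a trivial link, and deleting a loop edge gives a trivial arc-with-loop.

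The main obstacle is the lower bound on the bridge number for graphs that are almost unknotted. It requires a coloring whose rank grows linearly while every proper subgraph remains trivial. I would first try dihedral or symmetric-group colorings in which the vertex condition forces distinct generators on each summand, and I would check that these colorings are invariant under the virtual or welded moves being used.
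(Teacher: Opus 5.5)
\textbf{Gap in the bridge-number lower bound.} Your architecture matches the paper's: Theorem~\ref{thm:mainbridgebound} gives the lower bound on $a$, explicit crossing changes give the upper bounds on $a$ and $u$, and (a) uses iterated vertex sums of the Kinoshita $\theta$-curve. But the step that carries all the content, the lower bound on the bridge number, is missing in both parts.

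\textbf{Part (a).} Here the inequality you aim for is false. For the ordinary bridge number (components of $T_+$, equivalently overpasses), the paper imports from \cite{blackwell2026bridge} that a minimal presentation of $\#_3^n K$ has only $n+1$ components. So each summand contributes one, not two.

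Consequently $bridge(G_n)\ge 2n+1$ cannot be proved by any coloring. The plain form of Theorem~\ref{thm:mainbridgebound} then gives only $a(G_n)\ge n$, which says nothing against $u(G_n)\le n$.

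The missing idea is to pass to the restricted count $\operatorname{bridge}_\theta$: both trivalent vertices lie in the same ball, and strands may not run through a trivalent vertex. This is what a based diagram actually controls, since the maximal tree of a $\theta$-curve is one edge containing both vertices. The paper converts the $(n+1)$-component bound into $\operatorname{bridge}_\theta(G)=2n+1$ by counting endpoints on the bridge sphere. It then applies the theorem to get $a(G)\ge 2n$.

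Separately, the symmetric-group or dihedral coloring you propose to try does not exist in its naive form. With the usual vertex relation $x_1^{\pm1}x_2^{\pm1}x_3^{\pm1}=1$, three transpositions (or three reflections in a dihedral group) never multiply to the identity. So one must change the vertex convention, as the paper does by giving all edges at a vertex the same label.

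\textbf{Part (b).} You give no actual family. A ``clasp region built from a twisted or iterated pattern'' with the complexity ``spread out'' is a specification, not a construction. Nothing shows that the bridge number grows while $u=1$ and almost-unknottedness persist.

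The paper's construction has three inputs:
\begin{enumerate}
\item The Baader--Kjuchukova knots $K_n$: Whitehead doubles with $u(K_n)=1$ and $b(K_n)=n$, admitting a surjection $\pi_1(S^3\setminus K_n)\to S_n$ that sends meridians to transpositions.
\item A local replacement of two parallel strands producing $H_n$, such that changing the clasp crossing still unknots it.
\item A transfer step that avoids colorings at vertices entirely. Every associated link $L_{n,m}$, obtained by thickening the connecting edge into a band with $m$ full twists, inherits the $S_n$-surjection. Hence its meridional rank, and therefore its bridge number, is at least $n-1$, which forces $bridge(H_n)\to\infty$.
\end{enumerate}
Without a concrete family and a step of this kind, Theorem~\ref{thm:mainbridgebound} has nothing to act on.
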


The examples so far in the theorem above are all classical. Our primary focus will be on examples with virtual crossings in the final section. It is not hard to see that allowing classical crossings to be replaced by virtual crossings gives an unknotting operation for virtual knots. In Figure \ref{fig:vircross}, this corresponds to replacing either of the classical crossings on the left or right by the virtual crossing in the middle, or conversely. However, crossing changes alone do not unknot every virtual knot. Thus, one can ask how far a virtual knot is from being unknotted by crossing changes alone, allowing virtualizations only when crossing changes are insufficient. This idea is formalized by Kaur et al. as the \textit{unknotting index} \cite{kaur2019unknotting}. The invariant is a lexicographically ordered tuple, where the number of virtualizations is minimized first, and the number of crossing changes is minimized subject to that choice.\smallskip

There is a relationship between ascending number and the unknotting index. For virtual links, such a relationship is shown in Theorem 2.3 of \cite{kaur2019unknotting}, which we restate here.

\begin{theorem}[\cite{kaur2019unknotting}]
    \label{thm:theta} Let $D$ be a diagram representing a virtual link $L$ with $v$ virtual crossings. Then \[ u(G)\leq (v,a(L)).\]

\end{theorem}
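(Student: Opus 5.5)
The statement to prove is the cited Theorem 2.3 of \cite{kaur2019unknotting}: for a diagram $D$ of a virtual link $L$ with $v$ virtual crossings, $u(L)\le (v,a(L))$ in the lexicographic order. The plan is to exhibit one explicit sequence of operations with at most $v$ virtualizations, followed by at most $a(L)$ crossing changes, that trivializes $L$. The lexicographic minimum defining the unknotting index is then bounded above by $(v,a(L))$.

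\textbf{Step 1 (crossing changes).} Choose a based, ordered, oriented diagram $D'$ of $L$ realizing the ascending number $a(L)$. Let $v'$ be the number of virtual crossings of $D'$; the notation of the statement allows us to take $D'$ itself as the diagram $D$ with $v=v'$. Perform the $a(L)$ crossing changes that make $D'$ descending. This gives a descending diagram $D''$ with the same $v$ virtual crossings and the same shadow.

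\textbf{Step 2 (removing virtual crossings).} Trivialize $D''$ using at most $v$ virtualization-type moves. The moves that change virtual crossings into classical ones, the inverse of Figure~\ref{fig:vircross}, are unknotting moves in the paper's sense. The intended step is to replace each virtual crossing of $D''$ by a classical crossing whose over/under choice respects the descending order, meaning the strand met first in the traversal passes over.

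\textbf{Step 3 (the result is trivial).} The new diagram is a classical descending diagram, so the roller-coaster trick \cite{Adams2004,Ozawa2010} shows it represents the trivial link. Counting moves gives $v$ virtualizations and $a(L)$ crossing changes, which yields $u(L)\le (v,a(L))$.

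The main obstacle is Step 2. In the unknotting index, virtualizations are counted first and crossing changes second. So I must check that the reordering is harmless: doing the crossing changes before the virtualizations, or interleaving them, must still count as a valid unknotting sequence with the tuple $(v,a(L))$. Since the moves act on disjoint crossings, they commute. I would state this commutation explicitly and then apply the definition of \cite{kaur2019unknotting}. I would also check that, when a virtual crossing is turned classical, the chosen over/under assignment keeps the diagram descending for the fixed basing and ordering. This holds by construction, because the assignment is made from the traversal order.
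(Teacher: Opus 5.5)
Your mechanism is the standard one: use crossing changes to make the diagram descending, turn each virtual crossing into a classical crossing whose over/under is dictated by the basing and ordering, and apply the roller-coaster trick to the resulting classical descending diagram. The gap is in Step 1.

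\textbf{The diagram in Step 1.} The statement hands you a diagram $D$ with $v$ virtual crossings. You are not free to replace it by a diagram $D'$ realizing $a(L)$ and declare $v=v(D')$, because $D'$ may need more virtual crossings than $D$. As written, your argument only proves that \emph{some} diagram $D'$ satisfies $u(L)\le (v(D'),a(L))$. Run on the given $D$, it proves $u(L)\le (v(D),a(D))$, where $a(D)$ is the ascending number of $D$ itself, minimized over bases, orientations and orderings on $D$.

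These two bounds do not combine into $(v(D),a(L))$ for arbitrary $D$. Suppose every diagram realizing $a(L)$ has more than $v(D)$ virtual crossings. Then $a(D)>a(L)$, and $(v(D),a(L))$ is lexicographically smaller than both $(v(D'),a(L))$ and $(v(D),a(D))$. So you must choose between two options. You can read the $a(L)$ in the statement as the ascending number of the given diagram; under that reading your Steps 1--3 prove the statement verbatim, applied directly to $D$ with no detour through a global minimizer. Otherwise you need a genuinely new idea for the literal version, and your proposal contains none.

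\textbf{The direction of virtualization in Step 2.} This step needs one more sentence. In the unknotting index of Kaur et al., a virtualization replaces a classical crossing by a virtual one. You use the inverse replacement and justify it only by the paper's informal ``or conversely.''

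This is easy to repair at no extra cost. Let $x$ be a virtual crossing between strands $a$ and $b$, and suppose the descending order requires $a$ over $b$.
\begin{enumerate}
\item Perform a classical Reidemeister II move of $a$ over $b$ right next to $x$. This creates classical crossings $y,z$, where $z$ and $x$ are adjacent along both strands and bound an empty bigon.
\item Virtualize $z$.
\item Cancel $z$ against $x$ by a virtual Reidemeister II move.
\end{enumerate}
The net effect is that $x$ has become a classical crossing $y$ with $a$ over $b$, in the same position in the Gauss diagram, at the cost of exactly one genuine virtualization. So the count $(v,a)$ survives under the standard definition as well.
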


This motivates a broader comparison between two measures of complexity: the ascending number, which records how far a diagram is from being descending, and the unknotting number, which records how hard it is to reduce the knot to the unknot by crossing changes. We conclude by presenting computational results for these invariants on several virtual knots from Green’s table.

\subsection*{Organization of the paper}

The paper is organized as follows. In Section~\ref{sec:prelims}, we recall basic definitions related to virtual spatial graphs. In Section~\ref{sec:bridgenumber}, we discuss various notions of bridge numbers for different knotted objects. In Section~\ref{sec:ascend}, we show that the ascending number is well-defined in the virtual/welded category.

\subsection*{Acknowledgments}
The authors would like to thank Pitzer College for funding through the Research \& Awards Program.
\section{Virtual spatial graphs}
\label{sec:prelims}

In 1999, Louis H. Kauffman introduced \emph{virtual knots} to the field of knot theory as a generalization of classical knots \cite{Kauffman1999}. Later on, Fleming and Mellor introduced virtual spatial graphs, which can be thought of as graph drawings that have classical crossings, but now with the addition of what we call a \emph{virtual crossing} \cite{FlemingMellor2007}.

\begin{definition}
    A \emph{virtual crossing} is a 4-valent vertex with
a small circle around it that does not possess any over or under crossing information (Figure~\ref{fig:vircross}). A \textit{virtual spatial graph} is a graph diagram where crossings are decorated either as a classical crossing or virtual crossing modulo planar isotopies and Reidemeister-type moves (I)-(VI),(I*)-(V*) shown in Figure \ref{fig:Reid}. 
\end{definition}

\begin{figure}[!ht]
    \centering
    \includegraphics[width=.3\textwidth]{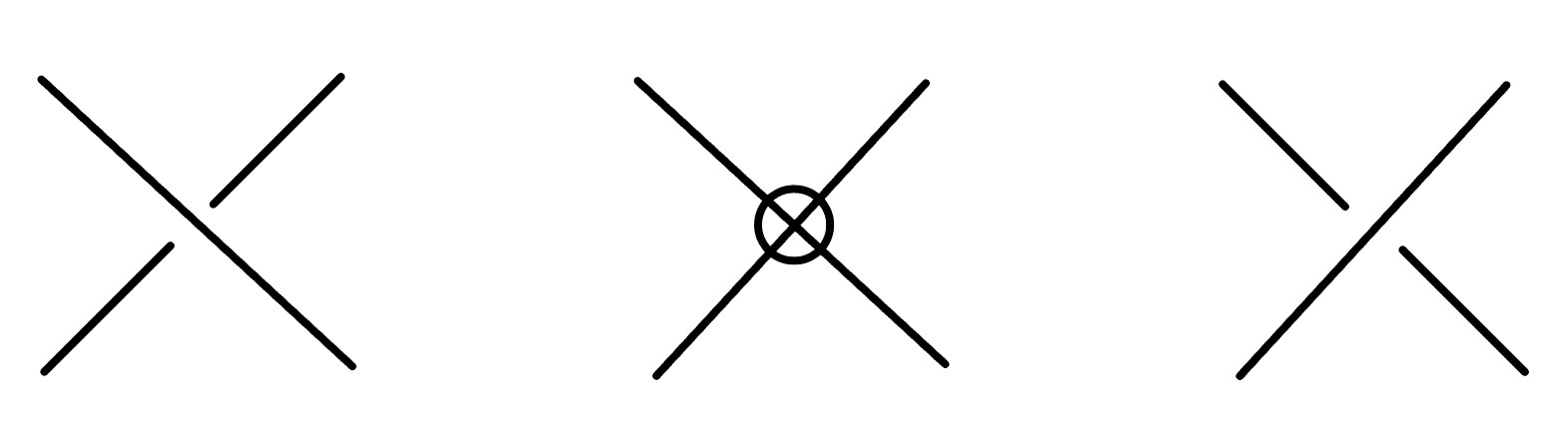}
    \caption{Virtual crossing (center) and classical crossings.}
    \label{fig:vircross}
\end{figure}

\begin{figure}[H]
    \centering\includegraphics[width=.6\textwidth]{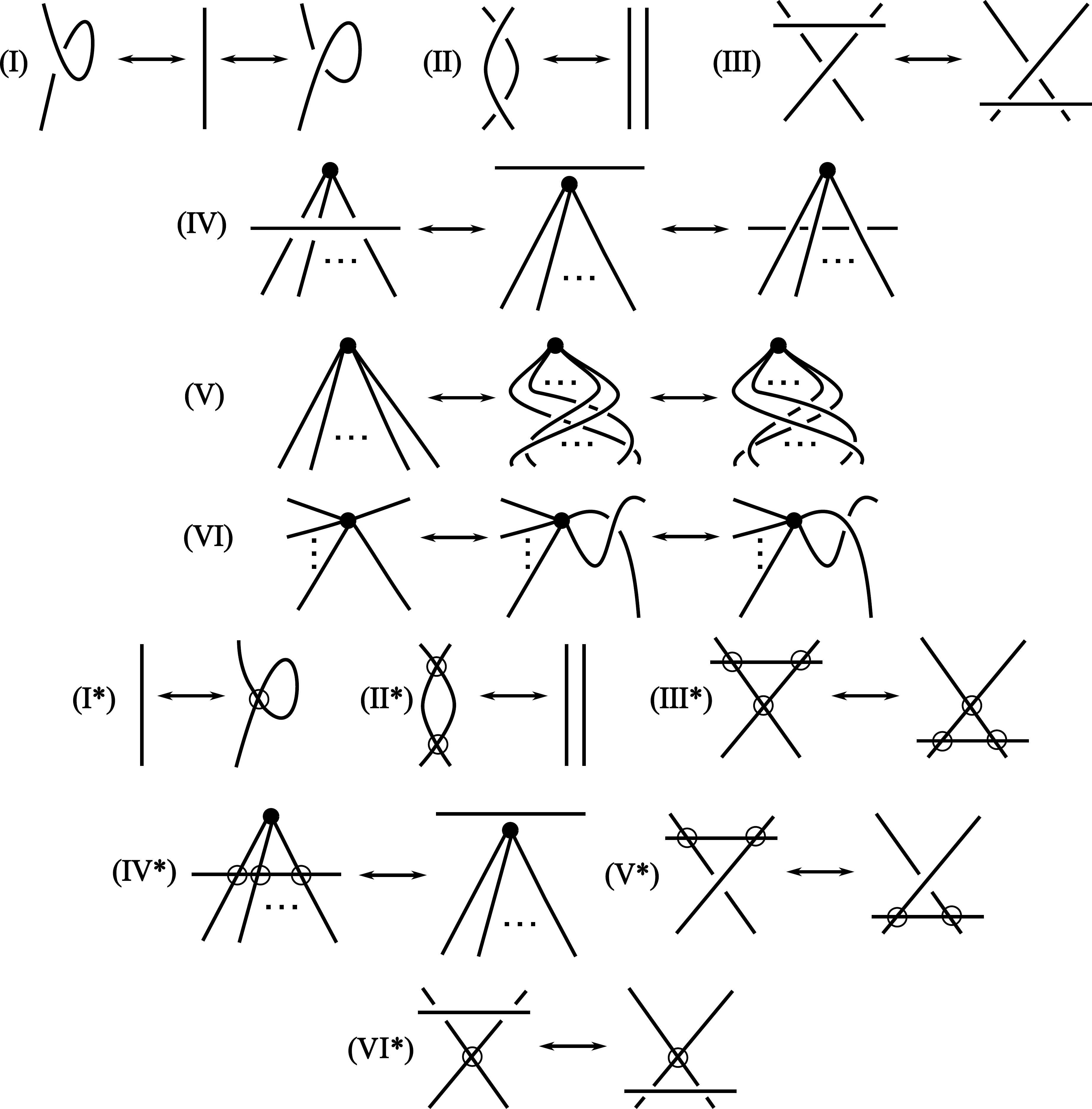}
    \caption{Reidemeister moves involved in virtual spatial graphs and welded links.}
    \label{fig:Reid}
\end{figure}

\begin{figure}[!ht]
    \centering\includegraphics[width=.7\textwidth]{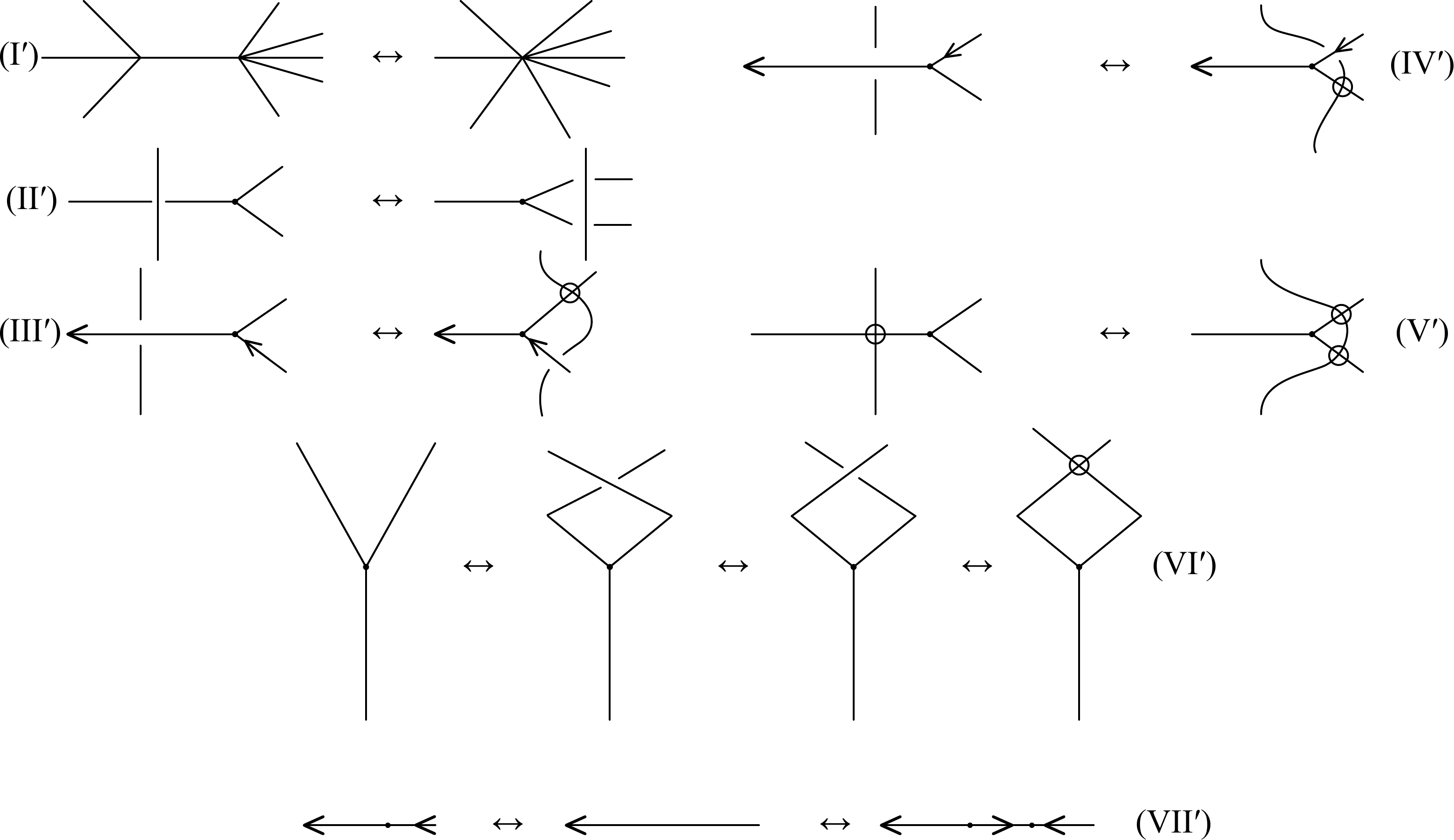}
    \caption{Reidemeister moves for welded graphs.}
    \label{fig:WeldReid}
\end{figure}

 The virtual crossing is not part of the structure of the knotted graph itself; it is an artifact of projecting a graph in a thickened surface to a plane. \smallskip

 When the abstract graph type is a collection of circles, we refer to such spatial graphs as virtual/welded links. When moves with the * symbols are not considered and virtual crossings are not present, we recover classical link theory and classical spatial graph theory. A virtual link, where we include move (VI*) into the equivalence relation is called a \textit{welded link}.\smallskip

 For welded spatial graphs, where the abstract graph type is not a collection of circles, the story is slightly different. The set of moves relating equivalent welded graphs is given in \cite{kauffman2008invariants}, which is also reproduced in Figure \ref{fig:WeldReid} for convenience. The readers are encouraged to consult Figure 46 of \cite{kauffman2008invariants} to see why moving a strand under a vertex introduces a virtual crossing. The motivation is to model the theory of ribbon surfaces in 4-space by broken sheet diagrams. The connection to knotted surface theory is through Satoh's Tube map \cite{kauffman2008invariants}. Roughly, there is a way to inflate a virtual spatial graph diagram $D$ to get a surface diagram $Tube(D)$ that is a double of a ribbon surface with boundary. It's also well-known that the fundamental group of the exterior of the surface with diagram $Tube(D)$ can be computed by using the classical Wirtinger algorithm on $D$, where virtual crossings do not contribute new relations. Furthermore, Satoh showed that if $K$ is welded equivalent to $K'$, then the corresponding ribbon tori are ambient isotopic. \smallskip

 The following subsection will be relevant to our computational section. It makes sense to discuss Gauss codes for virtual spatial graphs in general, but the computational work in this paper only concerns knots.
 
 \subsection{Gauss Codes for Virtual Knots}
%===============================================================

The \textit{Gauss code} of a virtual knot with $m>0$ classical crossings consists of the following symbols:
    \begin{itemize}
        \item Integers ranging from $\{1,...,m\}$, indexing the crossing,
        \item The letter $O$, denoting an overcrossing,
        \item The letter $U$, denoting an undercrossing,
        \item A sign ($+$ or $-$) indicating the handedness (positive or negative crossing).
\end{itemize}

 While not every Gauss code can be realized by a classical knot diagram, every Gauss code corresponds to a virtual knot. This makes virtual knot theory a natural setting for studying knots from a purely combinatorial perspective.

Similar to the original Gauss code, each crossing index appears exactly twice in the code, however now more specifically once as an over crossing and once as an under crossing. In the virtual Gauss code, clusters of 3 symbols appear together; one $O/U$, one index symbol, one $+/-$. These 3-symbol clusters give  the recipe to create each segment of the necessary diagram which can be constructed in the following way. \smallskip

Notice the first 3-symbol cluster, based on the handedness, draw the first crossing with the respective orientation. Label it with the given index and continue drawing out the under or over thread based on the specification. Then, illustrate the next 3-symbol cluster by continuing the line. Draw its respective under or over crossing, orienting the other strand with respect to the handedness of the crossing. Ensure to label all crossings as you go. Continue this process until all crossings are successfully illustrated with the correct respective handedness. Finally connect the line back to the starting point. \smallskip

In this process, virtual crossings, crossings that are not explicitly written in the Gauss code, will be created. Ensure to indicate those correctly by illustrating a circle around those specific vertices. \smallskip

Consider the example in Figure~\ref{fig:construction} of how to illustrate a knot diagram from a given Gauss code for the virtual knot 2.1.\smallskip

It is important to note that the algebraic encoding of the Gauss code does not always uniquely determine the diagram of a given virtual knot, and some geometric features may not be immediately apparent from the code alone. Figure~\ref{fig:varproject}, for example, shows how the same Gauss code can have varying projections and different placements or quantities of virtual crossings. 

\begin{figure}[H]
    \centering
    \includegraphics[width=0.3\linewidth]{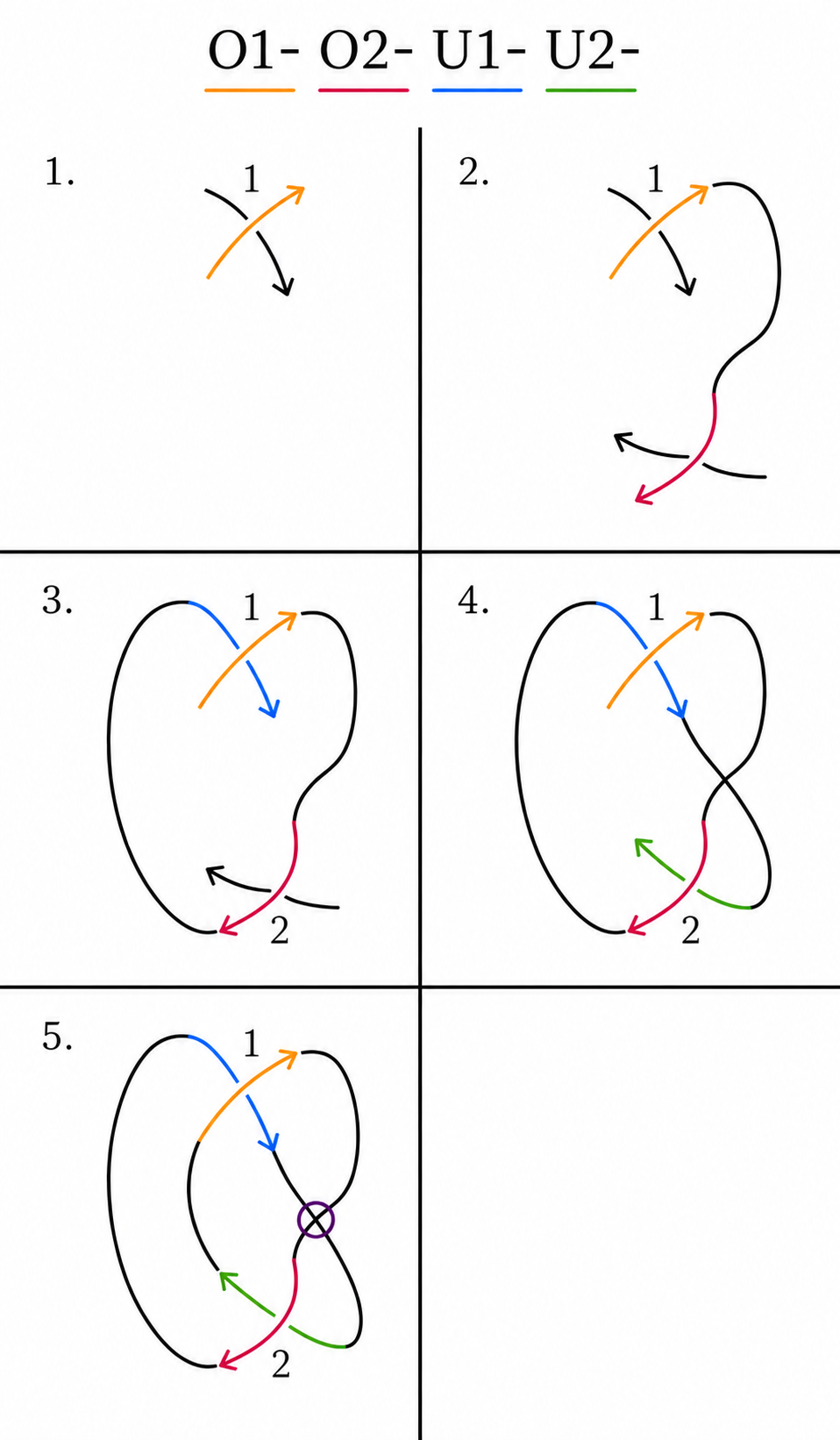}
    \caption{Step-by-step construction of virtual knot 2.1.}
    \label{fig:construction}
\end{figure}

\begin{figure}[H]
    \centering
    \includegraphics[width=0.3\linewidth]{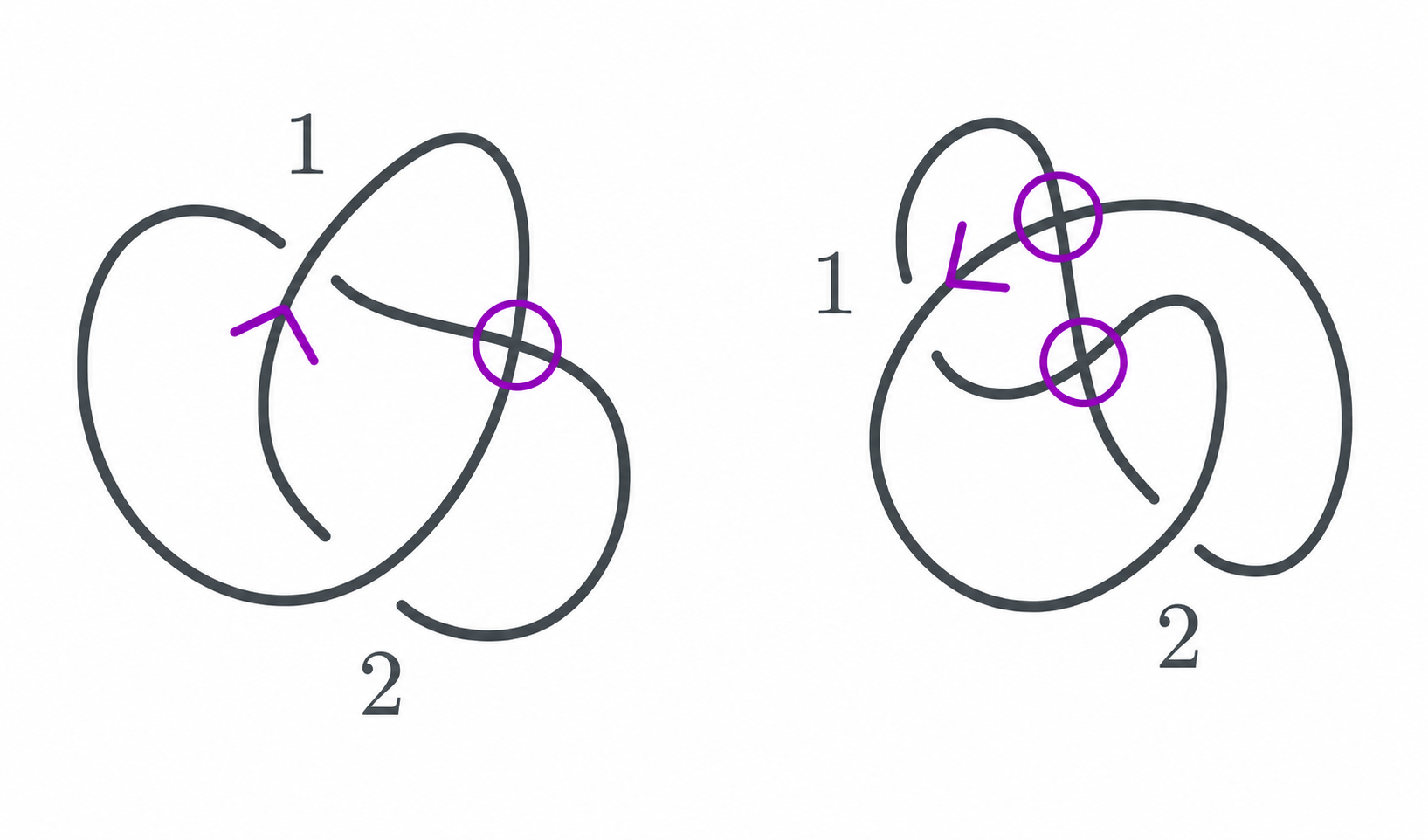}
    \caption{Varying projections of virtual knot 2.1.}
    \label{fig:varproject}
    \end{figure}
\section{Bridge number}\label{sec:bridgenumber}

\subsection{Bridge number of spatial graphs}
There are many ways to define the bridge number of classical spatial graphs. The following formulation appeared in \cite{ozawa2012bridge}. \smallskip

A \textit{bridge decomposition} of a spatial graph $G$ in the 3-sphere $S^3$ is a decomposition $(S^3,G) = (B^3_+,T_+)\cup_{\Sigma} (B^3_{-},T_{-})$ such that each component of $T_{\pm}$ is a tree or an arc, and there exists a disk $D_{\pm}$ in $B_{\pm}$
containing $T_{\pm}$. Each bridge position comes equipped with a surface $\Sigma$ called the \textit{bridge sphere}, which is a 2-sphere containing the endpoints of $T_{\pm}$. The (geometric) \textit{bridge number} of $G$ to be the minimum number of intersection points $\dfrac{1}{2}|G\cap \Sigma|$ over all bridge spheres $\Sigma$ for $G$. \smallskip

To be more compatible with the notion of ascending number, we may impose restrictions on the types of trees that can appear in $B_+^3$ or $B_-^3$. We may also count the number of components in each trivial tangle to define bridge number $bridge(G)$ instead of Ozawa's way of counting the number of intersection points. Curious readers can read ahead to see that a maximal tree $T$ plays a role in the definition of ascending numbers, so it is appropriate to make $T$ and its neighboring edges lie entirely in $B_+^3$ or $B_-^3$. Whenever we would like to restrict the types of trees, we will decorate $bridge(G)$ with a subscript. For instance, when we compute $bridge_{\theta}(G)$ for a $\theta$-curve $G$, we require that two vertices lie on the same side, say $B_+^3$ and we count the number of arc components.

\subsection{Bridge number of virtual and welded spatial graphs}

In the virtual and welded setting, the ambient space is a bit more cumbersome to work with, so we turn to the diagrammatic perspective.\smallskip

An \textit{overpass} is a connected component in a spatial graph diagram that contains at least one overcrossing and contains no undercrossings. We define the bridge number of a virtual spatial graph to be the minimum number of overpasses over all diagrams $D$ representing $G$. We also require that each connected component has at least one overpass. We can achieve this without changing the spatial graph type by performing Reidemeister one moves, for instance. Otherwise, one can have a diagram of the unlink with bridge number one, which seems unnatural.\smallskip

In fact, for classical spatial graphs, one can unify the concepts and show that the definition in terms of trees and overpasses are equivalent (see Corollary 3.3 of \cite{blackwell2026bridge}). 

\subsection{Wirtinger number}
We emphasize again that for a spatial graph diagram, there are various reasonable ways to define Wirtinger number. We choose to discuss versions that are compatible with the ascending number. We first define it so that the general bound in Theorem \ref{thm:mainbridgebound} works. Then, we will also modify the definition slightly, so that we get sharp bounds for $\theta$-curves. Again, whenever we would like to restrict the types of trees, we will decorate $\omega(G)$ with a subscript.

\begin{definition}[Wirtinger number of a virtual spatial graph diagram]
Let \(D\) be an oriented virtual spatial graph diagram. We ignore the virtual
crossings and impose Wirtinger relations only at the classical crossings.
Let \(\mathcal A(D)\) denote the set of strands of \(D\), where a strand is a
component of the diagram obtained by cutting \(D\) at each classical
undercrossing. \smallskip

A subset \(S\subset \mathcal A(D)\) is called a \emph{Wirtinger generating
set} for \(D\) if every strand of \(D\) can be obtained from \(S\) by repeatedly
applying the following rule. At a classical crossing, suppose that \(x_o\)
is the over-arc and \(x_i,x_j\) are the two under-arcs. If \(x_o\) and one
of \(x_i,x_j\) have already been obtained, then the remaining under-strand may
also be obtained, using the Wirtinger relation at the crossing.\smallskip

Equivalently, after choosing the usual convention for positive and negative
crossings, the relation at a crossing has the form
\[
        x_j=x_o^{\epsilon}x_i x_o^{-\epsilon},
        \qquad \epsilon\in\{+1,-1\}.
\]

Thus, knowing \(x_o\) and \(x_i\) determines \(x_j\), and knowing \(x_o\)
and \(x_j\) determines \(x_i\).\smallskip

The \emph{Wirtinger number} of the diagram \(D\) is
\[
        \omega(D)
        =
        \min\{|S|:S\subset \mathcal A(D)
        \text{ is a Wirtinger generating set for }D\}.
\]
\end{definition}

\begin{definition}
Let $G$ be a virtual (resp. welded) spatial graph. The \emph{virtual Wirtinger number} (resp. \emph{welded Wirtinger number}) of $G$ is
\[
        \omega(G)
        =
        \min\{\omega(D):D\text{ is a virtual spatial graph diagram for }G
        \text{ up to virtual (resp. welded) equivalence}\}.
\]

\end{definition}
\begin{remark}
\label{rem:coloring}
    Obtaining a new strand from a Wirtinger generating set can be visualized as extending the coloring from an original collection of colored strands. The extension rule is as follows: if the overstrand is colored and the incoming strand is colored, extend the coloring to the outgoing strand. The choice of actual colors used (red, green, blue etc.) does not matter for the computations of Wirtinger numbers. However, in some cases, it's convenient to impose some coloring rules. For instance, in Figure \ref{fig:tri}, we have used the rule that at each crossing, three distinct colors appear in order to exhibit a homomorphism to the symmetric group $S_3$.
\end{remark}
\begin{example}
    Consider the spatial graph $G$ depicted as a diagram in Figure \ref{fig:tri}. The strands colored red and blue form a Wirtinger generating set. In particular, the gray ovals on the left image are the places where the overstrands are colored and incoming strands are colored. The colorings are then extended in the right image. Thus, $\omega(G)\leq 2.$ We will see in the subsection on homomorphisms of spatial graph groups that this bound is actually sharp.
\end{example}

\begin{figure}[!ht]
    \centering\includegraphics[width=.7\textwidth]{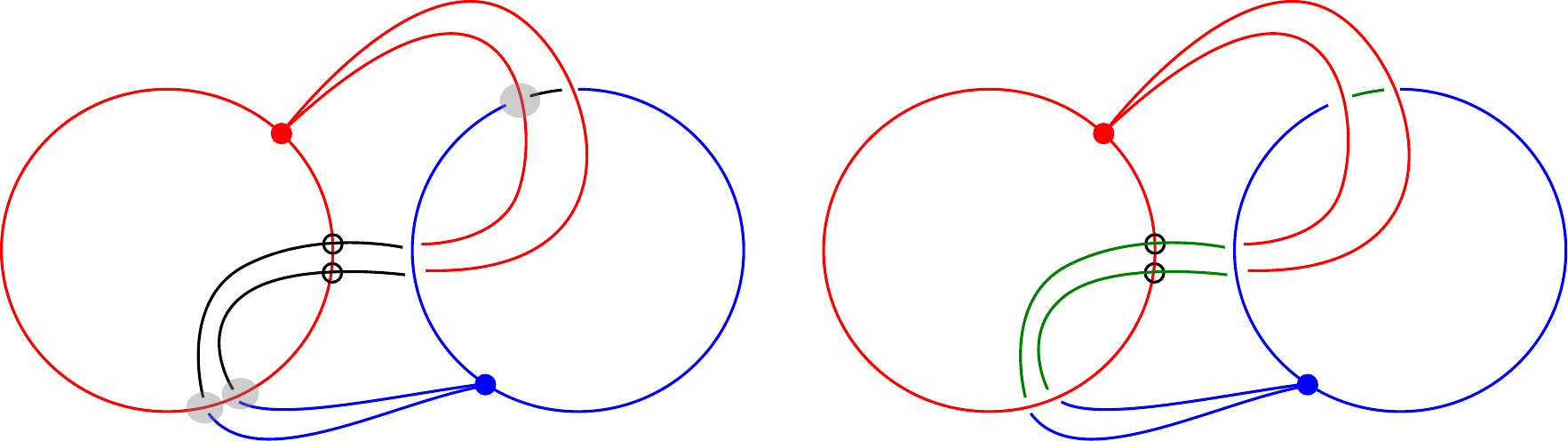}
    \caption{The Wirtinger number of this virtual spatial graph is 2.}
    \label{fig:tri}
\end{figure}

For $\theta$-curves, we will be modifying the definition of strands a bit to give a stronger bound for the ascending number. We will use the notation $\omega_{\theta}(G)$ to mean the Wirtinger number defined in the exact same way as in the beginning of the section except a strand includes only an arc between undercrossings that may include a bivalent vertex. This is better illustrated in Figure \ref{fig:kinowirtseq}, where the initial blue strand goes over a vertex of the $\theta$-graph. This is more restrictive than Figure \ref{fig:tri}, where a strand can include all arcs adjacent to a 4-valent vertex.

 \subsection{Spatial graph groups and quandles}
 A lower bound for the Wirtinger number of knotted objects is via the existence of surjective homomorphisms from the knotted object group to other well-understood groups. We will review how this works after reminding the readers of spatial graph groups. \smallskip

 Given a virtual spatial graph diagram, one can associate a fundamental group that does not change under virtual and welded Reidemeister moves as follows. Label each strand with a letter $\{x_1,x_2,\cdots,x_n\}$. At each crossing, depending on the sign of positive or negative crossing, the relation at a crossing has the form
\[
        x_j=x_o^{\epsilon}x_i x_o^{-\epsilon},
        \qquad \epsilon\in\{+1,-1\}.
\]
If a strand contains a vertex, all edges connected to these vertices receive the same letter.\smallskip

 To get a lower bound for the Wirtinger number, take a diagram realizing the Wirtinger number. Label the strands in the Wirtinger generating set as $S=\{x_1,x_2,\cdots,x_{\omega(G)}\}$. The fact that we have a Wirtinger generating set means that any other strand can be labeled as words in the letters from $\{x_1,x_2,\cdots,x_{\omega(G)}\}$. In other words, $\{x_1,x_2,\cdots,x_{\omega(G)}\}$ generates the spatial graph group of $G$. Now, define a mapping $f$ from $S$ to a symmetric group $S_n$ sending each $x_i$ to a transposition so that these transpositions generate $S_n$. We also would like the relations at each crossing to still hold using the images under $f.$ Since the symmetric group $S_n$ cannot be generated by fewer than $n-1$ transpositions, the Wirtinger number is at least $n-1$.

\begin{remark}\label{rem:counting}
    The preceding method relies on the existence of a single homomorphism to a symmetric group, whose structure is relatively well understood. For other algebraic structures, such as quandles, identifying a suitable homomorphism may be more difficult, and it can instead be useful to count all homomorphisms to a fixed finite target. Proposition~4.5 of \cite{blackwell2026bridge} gives a lower bound for the bridge number in terms of the number of homomorphisms from the fundamental quandle to a finite quandle. We will use only the consequence that, for a fixed finite quandle, if the number of such homomorphisms tends to infinity, then the bridge number also tends to infinity.
\end{remark}
\begin{example}[Example 1 revisited]
  The coloring in Figure~\ref{fig:tri} uses at least two distinct colors and assigns three distinct colors at each crossing. Interpreting the colors as transpositions in $S_3$, this coloring therefore determines a surjective homomorphism to $S_3$, since any two distinct transpositions generate $S_3$. Consequently, the group cannot admit a Wirtinger generating set of size one. \smallskip

The same conclusion can be obtained from the counting perspective described in Remark~\ref{rem:counting}. The colorings in which three distinct colors appear at each crossing correspond to homomorphisms from the fundamental quandle to the dihedral quandle of order $3$. There are six such nonconstant colorings, obtained by permuting the three colors. Together with the three constant colorings, this gives nine quandle homomorphisms in total. Since a one-element Wirtinger generating set would yield at most three homomorphisms to a quandle of order $3$, the Wirtinger generating set must have size at least two.

\end{example}

\subsection{Relating bridge numbers with Wirtinger numbers}

The following observation is straightforward.

\begin{prop}
   Let $G$ be a virtual or welded spatial graph. Then, $\omega(G)\leq bridge(G)$
\end{prop}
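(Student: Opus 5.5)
The plan is to prove the inequality diagram by diagram: for every diagram $D$ of $G$ with $b$ overpasses (each component carrying at least one, as the definition of $bridge(G)$ requires), I will exhibit a Wirtinger generating set of size at most $b$. Taking $D$ to realize $bridge(G)$ then gives $\omega(G)\le\omega(D)\le b=bridge(G)$. Since $\omega(G)$ is a minimum over all diagrams up to virtual (resp. welded) equivalence, the same argument covers both cases simultaneously.

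Fix such a $D$ with overpasses $O_1,\dots,O_b$. Each overpass contains no undercrossings, so it lies inside a single strand of $\mathcal A(D)$. Let $S$ consist of the strands containing the overpasses, so $|S|\le b$. Next, cut $D$ at its overpasses. Each resulting piece is a part of the diagram that contains no overcrossings, so at every classical crossing the over-arc is a point of some overpass, hence a strand of $S$. Walking along a piece starting from an endpoint where it meets an overpass, the incoming strand at each successive undercrossing has already been obtained, and the over-arc lies in $S$. The Wirtinger rule therefore produces the outgoing strand, and by induction along the piece every strand is obtained.

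At a vertex, all edges incident to it belong to the same strand, by the convention for strands. Generating one edge through the vertex therefore generates all of them, and the walk continues along each outgoing edge of the piece. Virtual crossings impose no relations and do not cut strands, so they are simply passed through.

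The main obstacle is making sure every piece actually begins at an already-colored strand. This is where the requirement that each component carry at least one overpass is used: it rules out a closed loop consisting entirely of undercrossings with no starting point. The other concern is a tree-like piece containing vertices. In a connected component, every piece is adjacent to some overpass, and a piece containing a vertex is a connected union of under-arcs. I will propagate from the overpass endpoint through the adjacent strand, crossing by crossing. When a vertex is reached, every incident edge is colored at once, and this handles the branching. Because each component is connected and contains an overpass, this propagation covers every strand. This completes $\omega(D)\le b$.
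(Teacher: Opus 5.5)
Your proposal is correct and follows essentially the same route as the paper: take a diagram realizing $bridge(G)$, start from the strands containing the overpasses, observe that every over-arc already lies in this set, and propagate the coloring along each component using the requirement that every component carries an overpass. Your explicit handling of vertices (all incident edges lie in one strand) and of branching pieces is somewhat more careful than the paper's traversal of a single component $K$, but the argument is the same.
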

\begin{proof}
  We show that the set $\mathcal A(D)$ of overpasses is a Wirtinger generating set. By Remark~\ref{rem:coloring}, it suffices to show that, after assigning colors to the strands in $\mathcal A(D)$, the coloring can be extended to every strand of $D$ using coloring extension moves.\smallskip

First, observe that the overstrand at every classical crossing belongs to an overpass and is therefore colored initially. Moreover, an initially uncolored strand contains no classical overcrossing, although it may contain virtual crossings. Since virtual crossings impose no coloring conditions, they play no role in the argument.\smallskip

We now work one component at a time. Let $K$ be a component of $D$. By hypothesis, $K$ contains at least one overpass. Consequently, at least one strand of $K$ is colored initially. This provides a starting point from which the coloring can be propagated to every other strand of $K$.\smallskip

Suppose that some strand of $K$ remains uncolored. Starting from an initially colored overpass, traverse $K$ in either direction until an uncolored strand is encountered for the first time. Let $s$ be the last colored strand encountered and let $s'$ be the first uncolored strand encountered. The strands $s$ and $s'$ meet as the two understrands at a classical crossing $x$, since passing through a virtual crossing does not change the strand. The overstrand at $x$ belongs to an overpass and is therefore colored initially. Thus, at $x$, the overstrand and the understrand $s$ are colored, so a coloring extension move assigns a color to $s'$.\smallskip

Repeating this procedure propagates the coloring along $K$ until every strand of $K$ is colored. The hypothesis that every component contains an overpass ensures that this propagation process has an initial colored strand on each component. Applying the argument separately to every component of $D$ therefore colors every strand of the diagram. Hence, $\mathcal A(D)$ is a Wirtinger generating set.
\end{proof}

The reverse inequality also holds when $G$ is a classical link \cite{BlairKjuchukovaVelazquezVillanueva2020}, virtual link/welded link \cite{Pongtanapaisan2019}, or classical spatial graph \cite{blackwell2026bridge}.

\section{Ascending number}\label{sec:ascend}

In this section, we define the ascending number of virtual and welded spatial graphs. For this section, let $G$ be a virtual, respectively welded, spatial graph with underlying abstract graph $\Gamma$. Recall that a \textit{maximal tree} (also called a \textit{spanning tree}) for a graph is a subgraph which is a tree and which connects every vertex of the original graph.\smallskip

A \emph{base} for $G$ is a maximal forest $T$ of $\Gamma$. If $\Gamma$ is connected, then $T$ is a maximal tree. Let $D$ be a virtual, respectively welded, diagram of $G$. The subdiagram of $D$ corresponding to $T$ will be denoted by $D_T$. We say that $(D,T)$ is a \emph{based diagram} if no classical crossing of $D$ has a branch lying on $D_T$. \smallskip

 We begin with a lemma that is needed in the definition.

\begin{lemma}
    There exists a based diagram $(D,T)$ representing $G$. That is, every classical crossing of $D$ occurs among the edge diagrams corresponding to the edges of $\Gamma \setminus T$.
\end{lemma}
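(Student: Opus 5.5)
We start from an arbitrary virtual (respectively welded) diagram $D_0$ of $G$ and modify it by moves allowed in the relevant theory until no classical crossing has a branch on the subdiagram of the chosen maximal forest $T$. The idea is to contract the forest to small disks and let it drag its adjacent edges along. Because $T$ is a forest, a small regular neighbourhood of each component of $D_{0,T}$ in the plane is a disk. Each such component can therefore be shrunk, by moves of types (IV)/(V), or their welded analogues and virtual counterparts, until it lies inside a tiny disk that meets nothing else of the diagram. Once this is done, every classical crossing lies on edges of $\Gamma\setminus T$, which is exactly the conclusion of the lemma.

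The shrinking is done one leaf edge at a time, by induction on the number of edges of $T$. Let $e$ be an edge of $T$ joining a leaf $w$ of the current tree to a vertex $v$. We pull $w$ along $e$ toward $v$, carrying along all edges of $\Gamma\setminus T$ incident to $w$, as in a finger move. Each time the shrinking pulls $w$ past a strand crossing $e$, the case depends on the type of crossing:
\begin{itemize}
\item for a virtual crossing, we use the virtual vertex move (V*), or (IV*), which is always permitted;
\item for a classical crossing where the other strand passes over $e$, we use the classical vertex move (V), pushing the strand over the vertex $w$; the crossings are transferred onto the edges incident to $w$ other than $e$;
\item for a classical crossing where the other strand passes under $e$, the classical move (V) also applies in the virtual category.
\end{itemize}
Afterwards $e$ has no crossings and can be shortened by planar isotopy. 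Repeating this for every edge, each tree component collapses into a small disk around a vertex, and its crossings have all migrated to the non-tree edges. Isolated vertices of $T$ are handled the same way, since a strand crossing near a vertex can be cleared by (V) or (V*).

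The main obstacle is the welded case. There, by the remark referring to Figure~46 of \cite{kauffman2008invariants}, moving a strand under a vertex introduces a virtual crossing, and only moves from Figure~\ref{fig:WeldReid} are allowed. We must check that the welded vertex moves still let a strand pass across the vertex $w$ in at least one direction, over or under. We also need every crossing this creates to be classical on non-tree edges, or virtual (virtual crossings are allowed on $D_T$). If passing in one direction is forbidden, we first replace each classical crossing on $e$ by a configuration in which the other strand passes over, using (VI*) (over-forbidden exchange) or a detour via virtual crossings, and then apply the permitted vertex move. We also need to handle the degenerate cases of self-crossings of $e$ and of loops, although loops never lie in $T$; self-crossings of $e$ are removed by Reidemeister I and II type moves as $e$ shortens.
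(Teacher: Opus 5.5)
For virtual diagrams your argument works, and it uses the same local mechanism as the paper. The only essential move is sliding a strand across a vertex, which trades a crossing on a tree edge for crossings with the other edges at that vertex. The paper phrases this as dragging each classical crossing $x$ along a path $P_x$ in $T$ to a leaf and off it; you instead collapse $T$ from its leaves. Your bookkeeping is tidier. Because $w$ is always a leaf of the current contracted tree, transferred crossings land only on non-tree edges or on the strand being moved, and already-shrunk edges never acquire new crossings, so induction on the number of edges closes the argument. By contrast, dragging a crossing past an interior vertex of $T$ produces crossings with the other tree edges there, which must then be dragged further.

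Two of your side remarks are inaccurate, though you do not need them:
\begin{itemize}
\item A planar neighbourhood of $D_{0,T}$ is not a disk when two portions of $D_{0,T}$ cross each other.
\item A self-crossing of $e$ generally cannot be removed by Reidemeister I/II moves, since its loop may enclose other parts of the diagram.
\end{itemize}
Vertex moves are local, so treat the other branch of a self-crossing of $e$, or an edge incident to $w$, like any other strand. This temporarily creates crossings of $e$ with the non-tree edges at $w$. Those edges are later passed across $w$ without creating crossings on $e$. Hence the pair (self-crossings of $e$, remaining crossings on $e$) decreases lexicographically, and the process terminates.

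The genuine gap is the welded case, which you leave as a list of things to check, with a fallback that does not work. No welded move changes which strand is on top at a classical crossing. In particular, (VI*) only slides a strand that passes over two strands across their virtual crossing. Nor can a classical crossing be ``detoured'' into virtual crossings without changing the welded type. So if a strand could not pass under a vertex, your procedure would stall at every crossing where the other strand passes under $e$.

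The missing ingredient, which the paper uses, is that the welded graph moves (IV$'$) and (III$'$) \emph{do} let a strand pass under a vertex, at the cost of creating virtual crossings. This is exactly what the remark you cite says. Since the based condition restricts only classical crossings, this still removes the crossing from $T$. You also omit the welded treatment of bivalent vertices. The paper uses (VII$'$) to reduce to at most one bivalent vertex per edge, and then (IV$'$), (III$'$), (I$'$) to remove them.
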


\begin{proof}
    \textbf{Case 1 ($G$ is a virtual spatial graph)}: Since each component of $T$ is a tree, there is a path $P_x$ from each point $x$ in the interior of $T$ to a leaf. So, let $x$ be a classical crossing point on $T.$ We move $x$ along $P_x$ via Reidemeister moves toward a leaf as shown in Figure \ref{fig:dragall} until $T$ does not contain $x$. Additional crossings will be created elsewhere, but not on $T.$ We note that even though more virtual and classical crossings are not depicted in the bottom of Figure \ref{fig:dragall}, the readers should imagine they are there and use the first row of Figure \ref{fig:dragall} for the general case. In Figure \ref{fig:pastvertex}, we show that moving a classical crossing point across a vertex can be realized as a sequence of graph Reidemeister moves. \smallskip

    \textbf{Case 2 ($G$ is a welded spatial graph)}: The idea is very similar with Case 1, with two main differences. One difference is in the welded case, when a strand goes under a vertex, a virtual crossing arises (see moves (IV$'$) and (III$'$) in Figure \ref{fig:WeldReid}). Nevertheless, it is still possible to get rid of a crossing on $T$. 
    
    The second difference is move (I$'$) may be needed when dealing with a bivalent vertex. First, due to move (VII$'$), we can assume that there is at most one bivalent vertex on an edge because more vertices will cancel out. In addition, moves (IV$'$), (III$'$) and (I$'$) can be performed to remove bivalent vertices as demonstrated in Figure \ref{fig:contract}.
\end{proof}

\begin{figure}[!ht]
    \centering
    \includegraphics[width=1\textwidth]{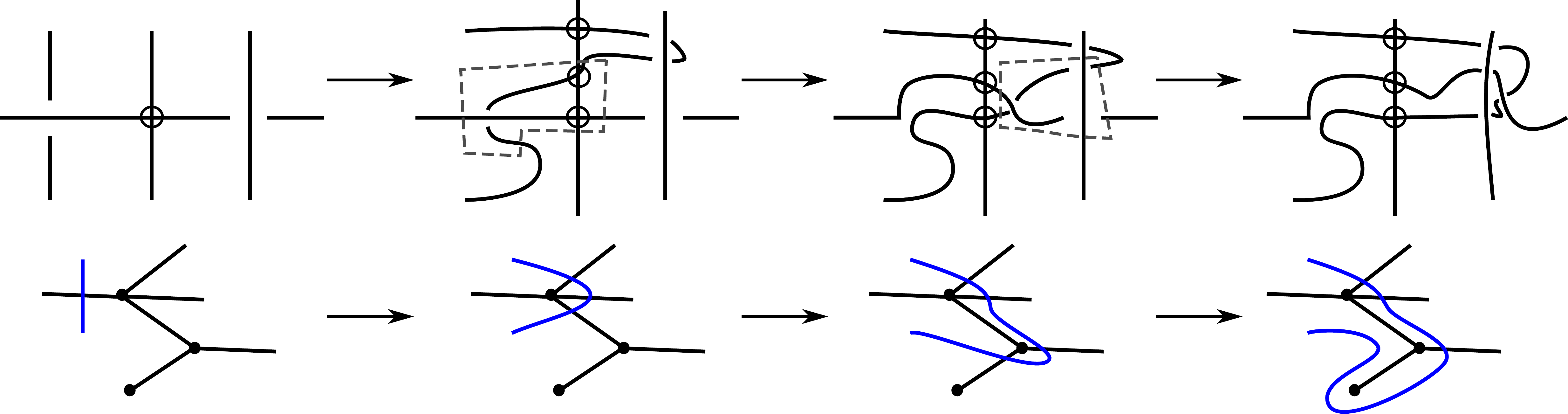}
    \caption{(Top) Dragging a classical crossing point past a virtual crossing and another classical crossing. (Bottom) Dragging a classical crossing point across a vertex and eventually over a leaf. The crossing is unspecified because we keep the same crossing type (over or under) throughout.}
    \label{fig:dragall}
\end{figure}

\begin{figure}[!ht]
    \centering
    \includegraphics[width=.7\textwidth]{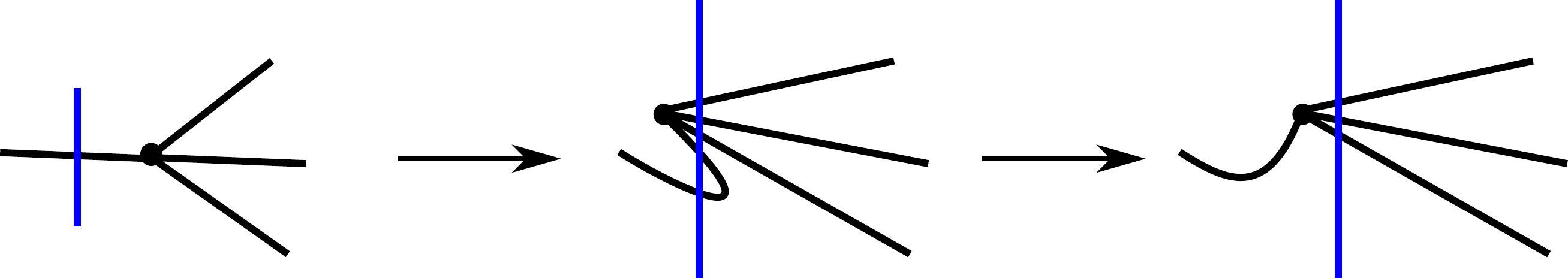}
    \caption{A sequence of Reidemeister moves.}
    \label{fig:pastvertex}
\end{figure}

\begin{figure}[!ht]
    \centering
    \includegraphics[width=.4\textwidth]{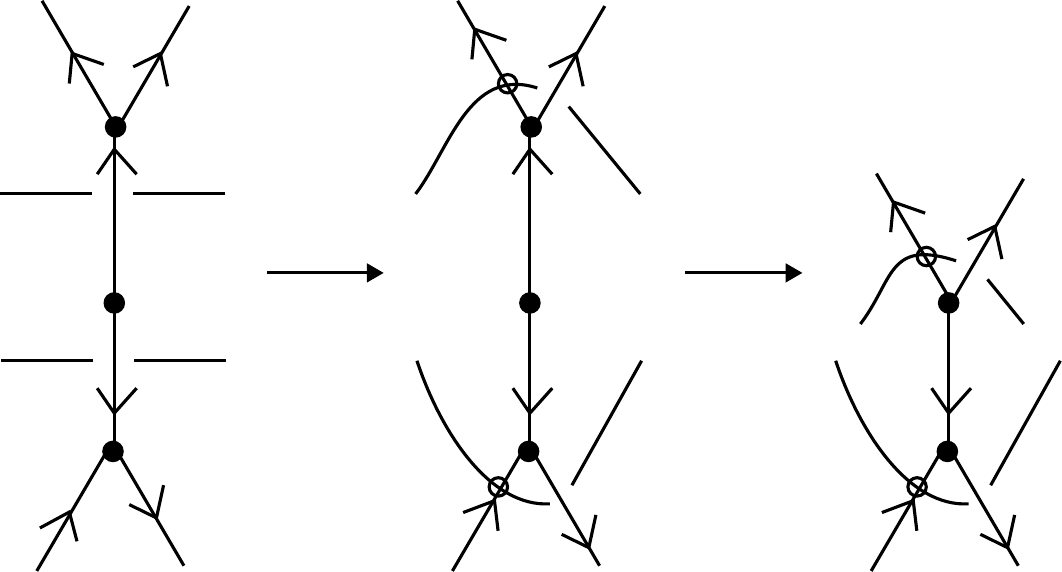}
    \caption{Move (I$'$) can be performed to remove bivalent vertices.}
    \label{fig:contract}
\end{figure}

Now, we are ready for the definition of ascending numbers.

\begin{definition}
    Let $e_1,e_2,\ldots,e_m$ be an ordering of the edges of $\Gamma \setminus T$, and choose an orientation on each $e_i$. We say that $(D,T)$ is \emph{descending with respect to this ordered oriented edge system} if the following conditions hold.

\begin{enumerate}
\item For each $i$, every classical self-crossing of the edge diagram $D_{e_i}$ is first encountered as an overcrossing when $D_{e_i}$ is traversed according to its chosen orientation.

\item If $i<j$, then at every classical crossing between $D_{e_i}$ and $D_{e_j}$, the edge diagram $D_{e_i}$ passes over $D_{e_j}$.

\end{enumerate}

Virtual crossings are ignored in these conditions. We say that $(D,T)$ is \emph{descending} if it is descending with respect to some ordering and orientation of the edges of $\Gamma \setminus T$. \smallskip

The \emph{ascending number} of a based diagram $(D,T)$, denoted by $a(D,T),$ is the minimum number of crossing changes at classical crossings required to transform $(D,T)$ into a descending based diagram. \smallskip

The \emph{ascending number} of a virtual, respectively welded, spatial graph $G$ is
\begin{center}
    $a(G) = \min \ \{ a(D,T) \mid (D,T) \text{ is a virtual, respectively welded, based diagram of } G \}.$
\end{center}
\end{definition}

 Equivalently, $a(G)$ is the minimum, over all diagrams $D$ of $G$ and all bases $T$, of the number of classical crossing changes needed to make the non-tree edges descending from first to last.

\section{Proofs of main results}
Arguably, the easiest way to unknot any classical knot $K$ is to take a diagram $D$ representing $K$, put a base point and an orientation on $K$, and perform crossing changes until one obtains a \textit{descending diagram}. To be descending means that as we start at the basepoint and start traversing the knot following the orientation, we encounter each crossing first as an overcrossing. This is due to the fact that one can obtain a one bridge presentation from a descending classical knot diagram and the only 1-bridge classical knot is the unknot.\smallskip

Ozawa in \cite{Ozawa2010} noticed that while this process accomplishes the task, it is not the most efficient way. That is, the minimum number of crossing changes needed to transform a knot diagram to ``any" diagram representing the unknot can be much lower than the minimum number of crossing changes needed to transform a knot diagram to a descending diagram. He used the bridge number to formally prove this intuitive fact. We will use the welded bridge number to obtain an analogous conclusion for welded knots. The following theorem is proven by Satoh in \cite{Satoh2018}.

\begin{theorem}[\cite{Satoh2018}]
A descending welded knot diagram represents the unknot. In particular, any welded knot diagram can be transformed into a diagram for the unknot by a sequence of extended Reidemeister moves, welded moves, planar isotopies and crossing changes.
\end{theorem}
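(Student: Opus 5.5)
The plan is to work with Gauss diagrams and argue by induction on the number of classical crossings of a descending welded knot diagram $D$. Fix the base point and orientation that make $D$ descending, and read off the Gauss word. Each classical crossing contributes an over-endpoint $O_c$ and an under-endpoint $U_c$, and $D$ being descending says exactly that $O_c$ precedes $U_c$ in the word for every $c$. Two standard facts translate diagrammatic moves into Gauss-word moves:
\begin{itemize}
\item virtual Reidemeister moves and detour moves do not change the Gauss word, so a diagram is determined up to virtual equivalence by its Gauss diagram;
\item the over-forbidden welded move (VI$^*$) lets two adjacent over-endpoints in the word be interchanged, with their signs and chords unchanged.
\end{itemize}
If there are no classical crossings, the Gauss diagram is empty and $D$ is the virtual, hence welded, unknot. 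This is the base case.

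For the inductive step, let $U_a$ be the first under-endpoint appearing in the word. Every letter before $U_a$ is an over-endpoint, and $O_a$ is among them. Using the welded interchange of adjacent over-endpoints, I slide $O_a$ to the right past the over-endpoints lying between it and $U_a$, until $O_a$ and $U_a$ are consecutive. In the diagram, the arc of $D$ from $O_a$ to $U_a$ then meets no other classical crossing, only virtual ones. A detour move removes those virtual crossings, leaving a classical kink, which a Reidemeister I move deletes whichever sign it has. The chord $a$ disappears, and the relative order of all remaining endpoints is unchanged. So the new diagram is still descending with respect to the same base point and orientation, and it has one fewer classical crossing. Induction finishes the first statement.

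For the ``in particular'', take an arbitrary welded knot diagram with a base point and orientation, traverse it, and change every classical crossing that is first met as an under-crossing. The result is descending, so it represents the unknot by the first part.

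I expect the main obstacle to be making the interchange step rigorous diagrammatically. I would need to check that swapping two adjacent tails in the Gauss word is realized by a sequence of virtual moves plus (VI$^*$), for all four sign combinations. The idea is to pull the short overstrand segment carrying one crossing over the other strand, which creates only virtual crossings and so uses precisely the over-forbidden move. A secondary point to verify is that a consecutive pair $O_aU_a$ really does bound a kink up to detour moves.
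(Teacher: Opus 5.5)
Your proof is correct, and it takes a different route from the paper, which gives no argument at all: the statement is quoted from Satoh and used as a black box. Yours is the standard Gauss-diagram argument. In the welded category over-endpoints commute, so the chord whose under-endpoint comes first can be shrunk to an isolated kink and removed by Reidemeister I, and the rest of the word stays descending.

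The two points you flag both go through.

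\begin{itemize}
\item \textbf{The interchange step.} The over-forbidden move is an unoriented local move. So it interchanges adjacent tails for every orientation of the two under-strands, which covers all four sign combinations. To reach its local picture, use a virtual Reidemeister II move between the two under-strands to create a virtual crossing next to the short over-arc joining the two over-crossings.
\item \textbf{The kink.} Once $O_a$ and $U_a$ are consecutive, the loop from crossing $a$ back to itself carries only virtual crossings. The outgoing over half-edge and the incoming under half-edge are adjacent in the cyclic order at a classical crossing. So a detour move redraws that loop as a small curl, and all four combinations of sign and over/under order are Reidemeister I curls.
\end{itemize}

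Compared with the bare citation, your route shows exactly where each hypothesis enters.

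\begin{itemize}
\item The interchange of over-endpoints is what fails for virtual knots. The descending diagram $\mathrm{O}1\,\mathrm{O}2\,\mathrm{U}1\,\mathrm{U}2$ of $2.1$ in the table is a nontrivial virtual knot. In the welded category it becomes $\mathrm{O}2\,\mathrm{O}1\,\mathrm{U}1\,\mathrm{U}2$ and collapses by two Reidemeister I moves.
\item Sliding $O_a$ next to $U_a$ requires both endpoints to lie on the same circle. This is why the argument cannot trivialize the descending virtual Hopf link from the introduction, whose single classical crossing has its over-endpoint and under-endpoint on different components.
\end{itemize}

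The paper's citation buys brevity. Your argument buys self-containment and makes visible why the result is special to welded knots.
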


We now generalize Ozawa's result to virtual spatial graphs. First, we make an observation that descending diagrams have minimal bridge numbers.

\mainbridgebound*

\begin{proof}
Let $D$ be a based diagram of a spatial graph $G$, and let $T$ be the maximal forest that determines the prescribed traversal of the edges of $D$. Suppose that $G$ has $n$ connected components. Then $T$ has exactly $n$ components.\smallskip

Let $u(D,T)$ denote the number of classical crossings that are first encountered along their understrands during the prescribed traversal. These are precisely the crossings that must be changed in order to make $D$ descending with respect to $T$.\smallskip

We construct a collection of overpasses in $D$. First, regard each component of $T$ as an initial overpass. This contributes $n$ overpasses. We then traverse the remaining edges in the prescribed order. If a crossing is encountered for the first time along its overstrand, then that overstrand can be included in the overpass currently being traversed, and hence the number of overpasses does not increase.\smallskip

On the other hand, suppose that a crossing $c$ is encountered for the first time along its understrand. The overstrand of $c$, when it is encountered later in the traversal, lies in an overpass that is not forced to be one of the initial overpasses arising from the components of $T$. We associate to $c$ the maximal overpass containing this overstrand. Thus, every overpass other than the initial $n$ overpasses is associated with at least one crossing that is first encountered as an undercrossing. Since several such crossings may determine the same overpass, the number of non-initial overpasses is at most $u(D,T)$.\smallskip

Consequently, $D$ admits a bridge decomposition with at most $n+u(D,T)$ overpasses. Therefore,
$$
b(G) \leq n+u(D,T).
$$
Taking the minimum over all based diagrams and all choices of maximal forest gives
$$
b(G) \leq n+a(G),
$$
and hence
$$
a(G) \geq b(G)-n.
$$

\end{proof}

We now give examples of $\theta$-curves, virtual, and welded knots whose unknotting number is low, but whose ascending number is high. This is the part where the Wirtinger number interpretation is useful.

\arbitrarilylarge*

\begin{proof}

\begin{enumerate}

\item[(a)]
Consider the Kinoshita $\theta$-curve $K$. The top of
Figure~\ref{fig:vertexsum} shows two copies of $K$, while the bottom
shows their vertex sum, which we denote by $K\#_3 K$. Before forming
the vertex sum, we perform a Reidemeister move~(VI) on the second copy
of $K$. This twist is introduced so that the resulting diagram is
compatible with our computation of the ascending number.

Let
\[
G=\#_3^n K
\]
denote the $n$-fold vertex sum of $K$, where a Reidemeister move~(VI)
is performed at each vertex sum. We claim that
\[
u(G)\leq n
\qquad\text{and}\qquad
a(G)=2n.
\]

\medskip

\noindent
\textbf{Claim.}
\[
u(G)\leq n.
\]

Figure~\ref{fig:unknotkino} shows that changing the highlighted
crossing transforms $K$ into the unknotted $\theta$-curve while
leaving the right-hand vertex fixed. We first perform this crossing
change in the leftmost summand of $G$. Since the right-hand vertex
remains fixed, the resulting unknotted summand can be isotoped into a
planar position without affecting the remaining summands.

We then repeat this procedure from left to right. One crossing change
is sufficient to unknot each summand. Therefore, $n$ crossing changes
transform $G$ into the unknotted graph of the same abstract type, and
hence
\[
u(G)\leq n.
\]

\medskip

\noindent
\textbf{Claim.}
\[
a(G)=2n.
\]

We first prove the upper bound. Figure~\ref{fig:vertexsum} shows that
changing two crossings in each summand produces a descending diagram.
The Reidemeister move~(VI) twists are chosen so that these local
crossing changes are compatible across the vertex sums and produce a
descending diagram of the entire graph. Thus,
\[
a(G)\leq 2n.
\]

For the lower bound, we use the bridge-number calculation from
\cite{blackwell2026bridge}. It is shown there that a minimum
unweighted bridge presentation of the $n$-fold vertex sum of the
Kinoshita $\theta$-curve has $n+1$ connected components. Although the
diagrams considered in \cite{blackwell2026bridge} do not contain the
Reidemeister move~(VI) twists used here, these local twists do not
affect the bridge-number calculation.

To compare the bridge number with the ascending number, we require the
two trivalent vertices to lie in the same ball. Write the bridge
decomposition as
\[
(S^3,G)
=
(B^3_+,T_+)\cup_S(B^3_-,T_-),
\]
where both trivalent vertices lie in $B^3_+$. By the unweighted
bridge-number calculation, $T_+$ has at least $n+1$ connected
components.

The components of $T_+$ consist of trivial trees and arcs. Since
$T_+$ contains both trivalent vertices, these $n+1$ components
contribute at least
\[
2n+4
\]
intersection points with the bridge sphere $S$. Every component of
$T_-$ is an arc with two endpoints on $S$. Therefore, $T_-$ contains
at least
\[
\frac{2n+4}{2}=n+2
\]
arcs.

The bridge number used in Theorem~\ref{thm:mainbridgebound} is
determined by the number of arcs in $T_-$. If $T_-$ contains $n+2$
arcs, then
\[
\operatorname{bridge}_{\theta}(G)
=
2(n+2)-3
=
2n+1.
\]
This value is realized by the bridge presentation obtained from the
construction in \cite{blackwell2026bridge}. Hence,
\[
\operatorname{bridge}_{\theta}(G)=2n+1.
\]

In particular, for one summand the bridge number is $3$, for two
summands the bridge number is $5$, and, in general, for $n$ summands
the bridge number is $2n+1$.

Applying Theorem~\ref{thm:mainbridgebound}, we obtain
\[
a(G)
\geq
\operatorname{bridge}_{\theta}(G)-1
=
(2n+1)-1
=
2n.
\]
Combining this with the upper bound gives
\[
a(G)=2n.
\]

Since $u(G)\leq n$, it follows that
\[
a(G)-u(G)
\geq
2n-n
=
n.
\]
Therefore, the difference between the ascending number and the
unknotting number can be made arbitrarily large. \qed

\item[(b)]
For each $n$, let $K_n$ be the knot constructed by Baader and
Kjuchukova in \cite{BaaderKjuchukova2020}. These knots satisfy
\[
u(K_n)=1
\qquad\text{and}\qquad
b(K_n)=n.
\]
Each $K_n$ is a satellite knot with Whitehead pattern. Moreover,
$K_n$ admits a surjective homomorphism
\[
\varphi_n\colon
\pi_1(S^3\setminus K_n)
\longrightarrow
S_n
\]
that sends meridians to transpositions.

Let $H_n$ be the handcuff graph obtained by replacing two parallel
strands of $K_n$ by the local configuration shown in
Figure~\ref{fig:whiteheadhandcuff}. The figure illustrates the
construction when $n=5$.

\medskip

\noindent
\textbf{Claim.}
\[
u(H_n)\leq 1.
\]

As shown in Figure~\ref{fig:whiteheadhandcuff}, changing the
highlighted crossing removes the Whitehead clasp. After this crossing
change, the two loop edges and the edge joining them can be isotoped
into a plane. The resulting spatial graph is therefore the unknotted
handcuff graph. Hence,
\[
u(H_n)\leq 1.
\]

\medskip

\noindent
\textbf{Claim.}
\[
\lim_{n\to\infty}
\bigl(a(H_n)-u(H_n)\bigr)
=
\infty.
\]

Recall that an associated link of a handcuff graph is obtained by
replacing a regular neighborhood of the non-loop edge with a band.
Any number of full twists may be inserted into this band. The
collection of links obtained in this way is an invariant of the
spatial handcuff graph.

Let $L_{n,m}$ denote the associated link obtained from $H_n$ by
inserting $m$ full twists into the band. The local replacement used
to construct $H_n$ does not affect the symmetric-group quotient
inherited from $K_n$. Moreover, the coloring extends across the band,
regardless of the number of full twists. Consequently, for every
integer $m$, there is a surjective homomorphism
\[
\psi_{n,m}\colon
\pi_1(S^3\setminus L_{n,m})
\longrightarrow
S_n
\]
that sends meridians to transpositions.

Any collection of transpositions generating $S_n$ must contain at
least $n-1$ elements. Indeed, associate a graph to such a collection
by taking its vertex set to be
\[
\{1,\ldots,n\}
\]
and adding an edge between $i$ and $j$ for each transposition $(ij)$
in the collection. If the transpositions generate $S_n$, then this
graph must be connected. Since a connected graph on $n$ vertices has
at least $n-1$ edges, at least $n-1$ transpositions are required.

It follows that
\[
\operatorname{mr}(L_{n,m})\geq n-1,
\]
where $\operatorname{mr}$ denotes meridional rank. Since meridional
rank is bounded above by bridge number, we have
\[
b(L_{n,m})\geq n-1.
\]

Thus, the bridge numbers of all associated links of $H_n$ tend to
infinity as $n$ tends to infinity. The bridge-number bound coming
from associated links therefore implies that
\[
b(H_n)\longrightarrow\infty
\qquad\text{as}\qquad
n\longrightarrow\infty.
\]
By Theorem~\ref{thm:mainbridgebound}, it follows that
\[
a(H_n)\longrightarrow\infty
\qquad\text{as}\qquad
n\longrightarrow\infty.
\]
Since $u(H_n)\leq 1$ for every $n$, we conclude that
\[
\lim_{n\to\infty}
\bigl(a(H_n)-u(H_n)\bigr)
=
\infty.
\]

\end{enumerate}

\end{proof}

\begin{figure}[ht!]
    \centering
    \includegraphics[width=.5\textwidth]{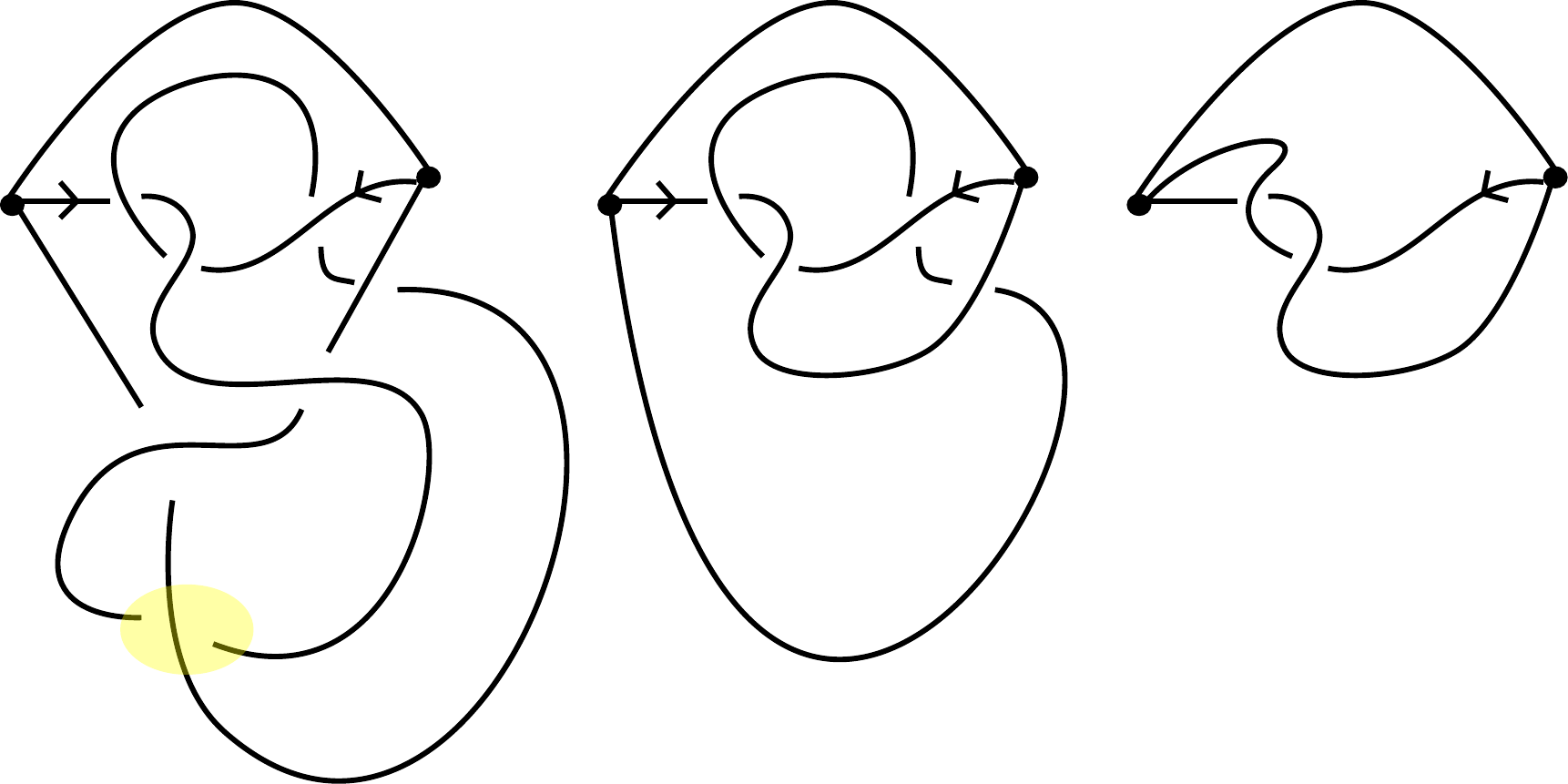}
    \caption{Performing one crossing change will turn the Kinoshita graph into the trivial $\theta$-curve.}
    \label{fig:unknotkino}
\end{figure}

\begin{figure}[ht!]
    \centering
    \includegraphics[width=.35\textwidth]{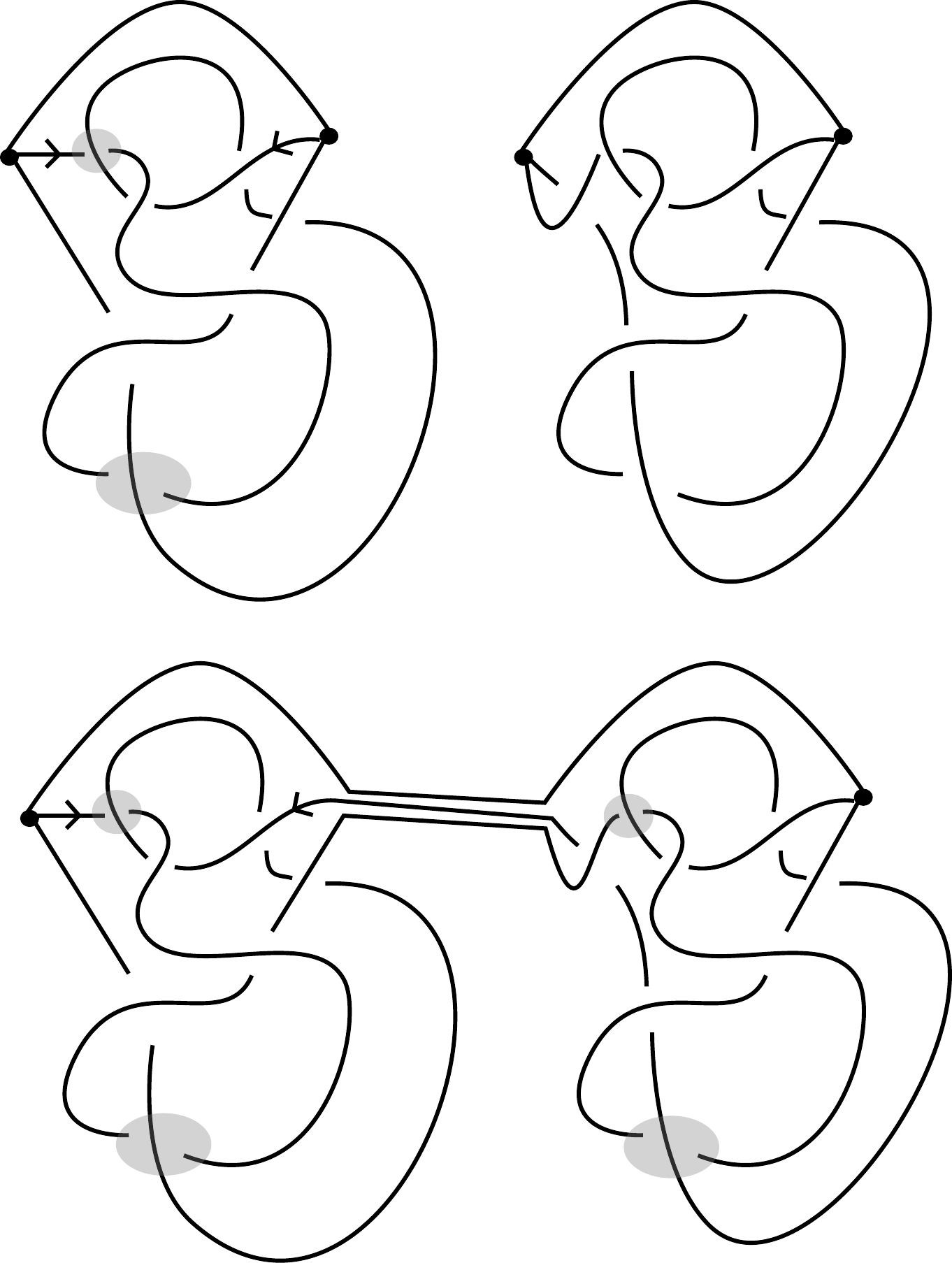}
    \caption{(Top) Two copies of the Kinoshita $\theta$-curve. The highlighted crossings are the two crossings whose crossing yields a descending diagram for the Kinoshita $\theta$-curve. (Bottom) A vertex sum of Kinoshita $\theta$-graphs that makes ascending number additive.}
    \label{fig:vertexsum}
\end{figure}

\begin{figure}[ht!]
    \centering
    \includegraphics[width=.6\textwidth]{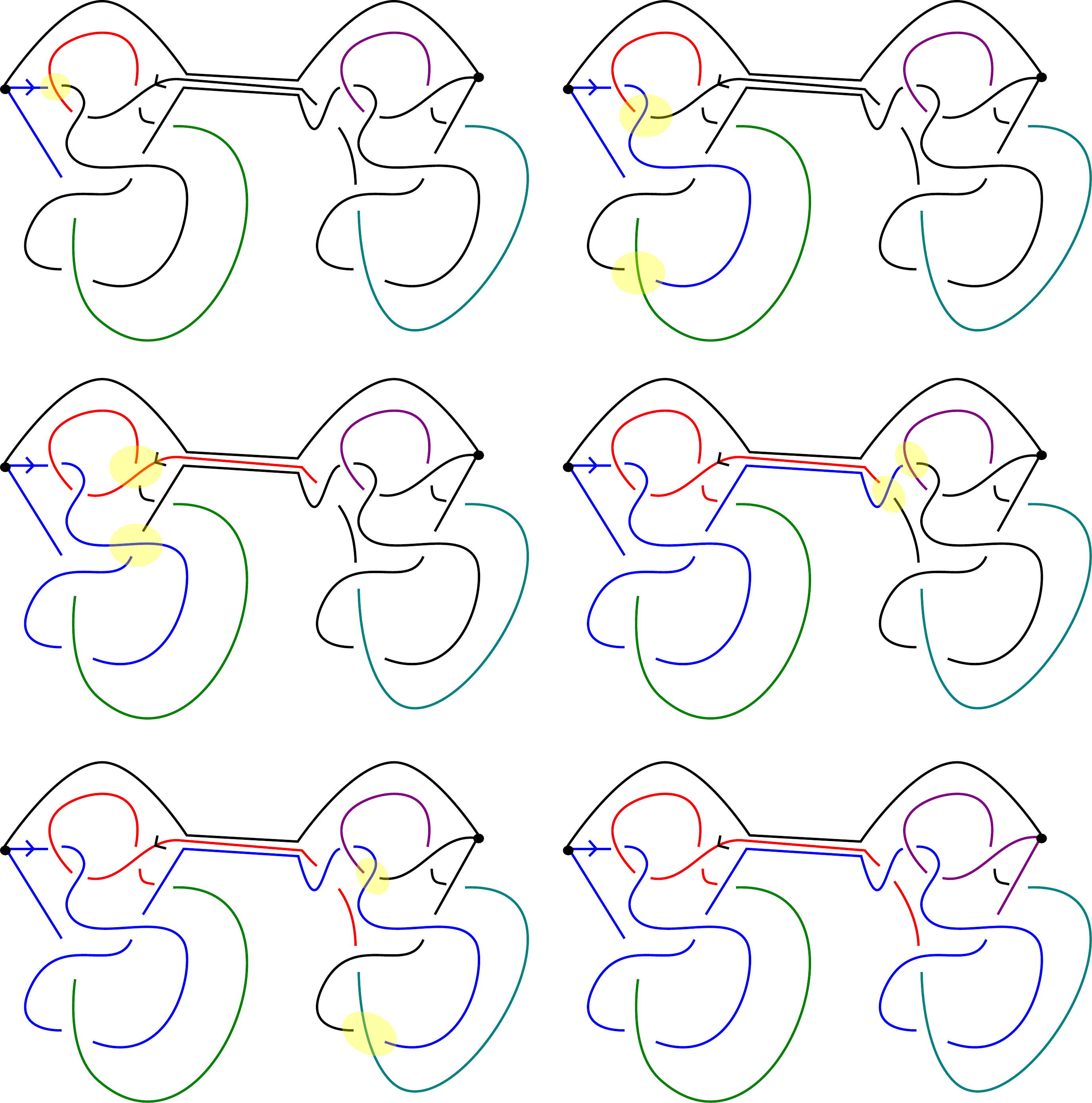}
    \caption{A sequence of coloring extension moves showing that $\omega_{\theta}\leq 5.$}
    \label{fig:kinowirtseq}
\end{figure}

\begin{figure}[H]
    \centering
    \labellist
\small\hair 2pt
\pinlabel $(34)$  at 1119 919
\pinlabel $(34)$  at 849 1000
\pinlabel $(35)$  at 965 981
\pinlabel $(45)$  at 1003 1119
\pinlabel $(45)$  at 933 1119
\pinlabel $(35)$  at 1039 799
\pinlabel $(35)$  at 890 799
\pinlabel $(34)$  at 960 699
\pinlabel $(45)$  at 1129 570
\pinlabel $(45)$  at 800 599
\pinlabel $(34)$  at 882 499
\pinlabel $(34)$  at 976 479

\pinlabel $(34)$  at 269 1069
\pinlabel $(45)$  at 361 1069
\endlabellist

    \includegraphics[width=.6\textwidth]{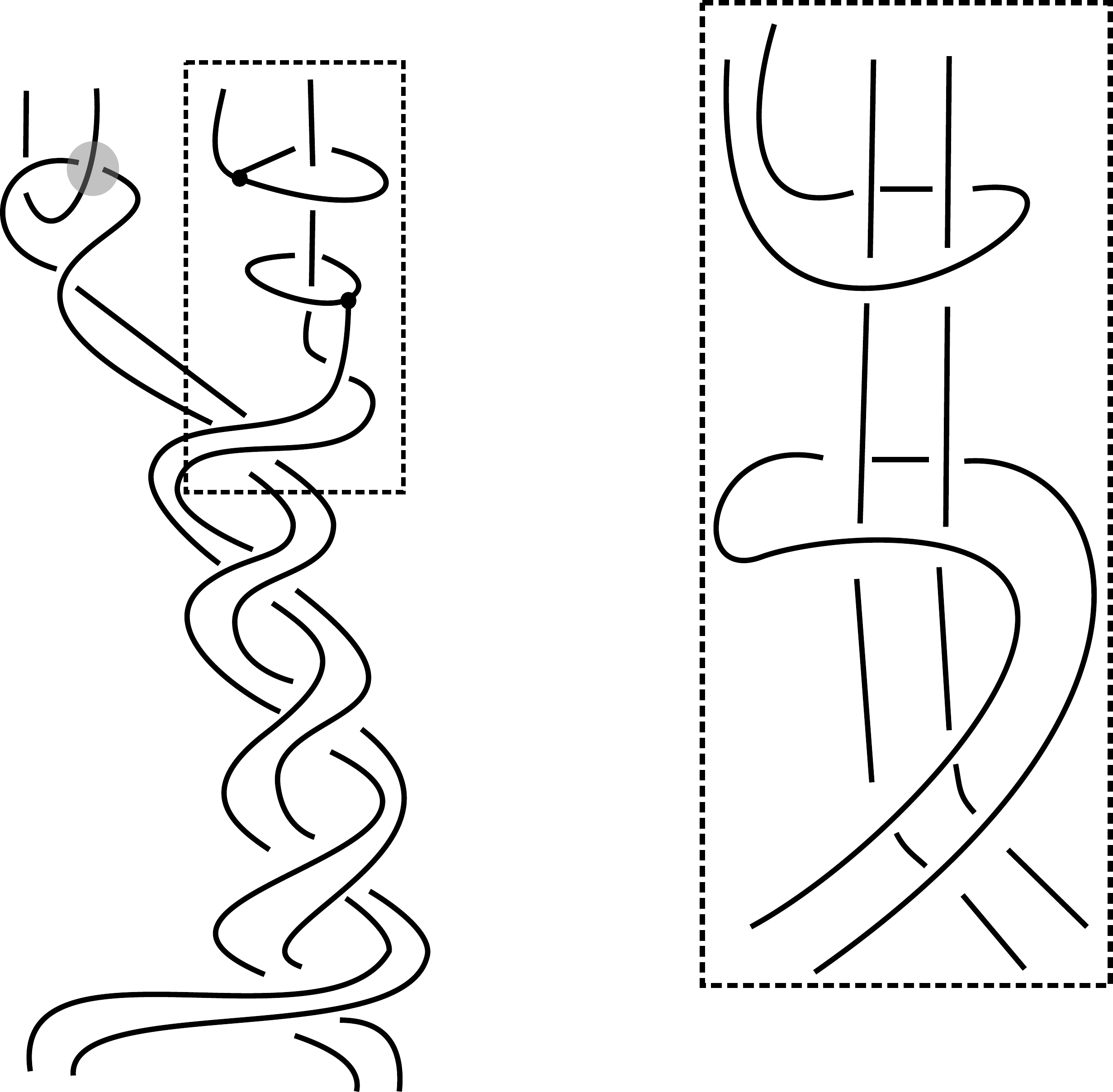}
    \caption{The reader should imagine the standard braid closure. After turning an edge into a band, we still have a surjective homomorphism to $S_n.$}
    \label{fig:whiteheadhandcuff}
\end{figure}

\section{Appendix: Computational work}
\label{sec:computational}
In this section, we begin to investigate ascending number and unknotting number of virtual knotted objects, focusing mainly on virtual knots. The computer code written by the third author is available here: https://github.com/ppongtan/VirtualGordian.

\subsection{A lower bound of unknotting number from slice genus}

It is expected that the slice genus will bound the number of crossing changes to transform to the unknot. The authors could not find it in other articles, so we produce a short proof here.

\begin{corollary}
    Suppose that virtual knots $K$ and $J$ differ by $n$ crossing changes. Then, there is a genus $n$ cobordism connecting $K$ and $J$.
\end{corollary}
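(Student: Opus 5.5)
The plan is to reduce to a single crossing change and then glue the resulting cobordisms together. I will use the standard notion of cobordism for virtual knots, due to Kauffman and Carter–Kamada–Saito: two virtual knots are cobordant if they are related by a finite sequence of virtual and generalized Reidemeister moves, births and deaths of unknotted components, and saddle moves at classical positions. The genus of such a cobordism is computed from its Euler characteristic: if there are $s$ saddles, $b$ births and $d$ deaths, then the genus is
\[
g=\frac{s-b-d}{2},
\]
since the cobordism is connected with two boundary circles.

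First I treat $n=1$. Suppose $K$ and $J$ differ by one crossing change at a classical crossing $c$. I will perform an oriented saddle move in a small neighborhood of $c$. This turns the knot into a two-component virtual link: locally the crossing is resolved into the oriented smoothing, and I keep the crossing information by adding a small kink on one side. A second saddle then recombines the two components so that the local picture is the crossing with its sign reversed. This is the virtual version of the standard fact that a crossing change can be realized by two saddles together with Reidemeister moves.

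A convenient concrete model is the following. A crossing change is the same as inserting a full twist between the two strands. That full twist can be produced by first banding the two strands together, then doing a Reidemeister I twist on the band, then banding back. Since every step is local inside a disk containing $c$, the virtual crossings elsewhere in the diagram play no role. This step produces a cobordism from $K$ to $J$ with two saddles and no births or deaths, hence of genus $(2-0)/2=1$. The key check is that the intermediate diagrams are legitimate virtual diagrams, which holds because the moves are planar and local. I also need the two saddles to merge and split the same component, so that the surface is connected and orientable of genus one rather than a union of disks. For this I will rely on orientation compatibility: the oriented saddle at $c$ splits a knot into two components, and the second saddle joins them back.

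For general $n$, let $K=K_0,K_1,\dots,K_n=J$ be the successive diagrams, where each $K_{i}$ differs from $K_{i-1}$ by one crossing change. Stacking the $n$ genus-one cobordisms end to end gives a connected surface with $2n$ saddles and no births or deaths, hence genus $n$.

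The main obstacle is to verify carefully that the two-saddle realization of a crossing change is correct in the virtual setting. Concretely, one must check that the local band move reproduces the crossing change up to classical Reidemeister moves supported in the disk, and that the whole construction is compatible with the chosen definition of virtual cobordism. I would try the band/full-twist picture first, since it is manifestly local.
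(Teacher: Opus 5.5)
Your proposal is correct and is essentially the paper's argument: the paper realizes each crossing change as one oriented band move from the $+1$ tangle to the oriented smoothing, followed by a second oriented band move to the $-1$ tangle, so each crossing change costs two saddles and stacking $n$ of them gives genus $n$. The extra kink and the worry about connectedness are unnecessary, since attaching bands to the connected annulus $K\times I$ keeps the surface connected, so your count $g=(s-b-d)/2$ applies directly.
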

\begin{proof}
    Each oriented band move represents a saddle. One can achieve a crossing change by first performing an oriented band move from $+1$ tangle to a crossing-less tangle, and then performing another oriented band move from the crossing-less tangle to the $-1$ tangle (See Figure \ref{fig:spins}).
\end{proof}

\begin{figure}[h]
\centering
\includegraphics[scale=0.4]{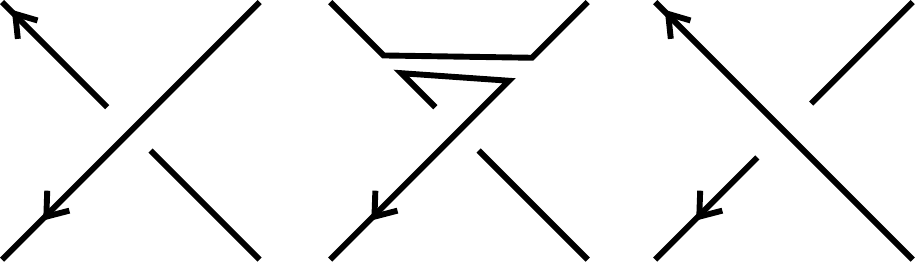}
\caption{One crossing change achieved by two oriented band surgeries.}
\label{fig:spins}
\end{figure}
Chrisman, Boden, and Gaudreau study virtual slice genus in \cite{boden2019virtual}. Their table can be used to help with lower estimates for the crossing change distance.
\begin{example}
    The virtual knot 4.53 can be turned into the unknot by 2 crossing changes. Furthermore, the slice genus is 2. Therefore, the unknotting number of 4.53 is precisely 2.
\end{example}

\subsection{Table of ascending number and unknotting number estimates}

We recall that Theorem \ref{thm:theta} provides useful context for interpreting the comparison between ascending and unknotting numbers. Since virtual crossings are not recorded in a Gauss code, readers are encouraged to reconstruct the corresponding virtual knot diagrams in order to visualize this comparison. We emphasize that the ascending number listed in the table is the ascending number of the particular diagram represented by the Gauss code and therefore provides only an upper bound for the ascending number of the underlying virtual knot type. By contrast, an entry in the $g_4$ or $u$ column gives the exact value for the virtual knot type unless it is preceded by $\leq$ or is given as an interval, such as $[1,2]$.

{\scriptsize
\begin{longtable}{|c|p{0.43\textwidth}|p{0.21\textwidth}|c|c|c|}

\hline
Virtual knot & Original Gauss code & Reduced Gauss code & $a(D)$ & $g_4$ & $u$ \\
\hline
\endfirsthead

\hline
Virtual knot & Original Gauss code & Reduced Gauss code & $a$& $g_4$ & $u$ \\
\hline
\endhead

$2.1$ & $\mathrm{O}1^{-}\ \mathrm{O}2^{-}\ \mathrm{U}1^{-}\ \mathrm{U}2^{-}$ & $\mathrm{O}2^{-}\ \mathrm{U}2^{-}$ & $0$ & 1 & $1$ \\ \hline
$4.3$ & $\mathrm{O}1^{-}\ \mathrm{O}2^{-}\ \mathrm{U}1^{-}\ \mathrm{U}2^{-}\ \mathrm{O}3^{-}\ \mathrm{U}4^{-}\ \mathrm{U}3^{-}\ \mathrm{O}4^{-}$ & $\mathrm{O}2^{-}\ \mathrm{U}2^{-}\ \mathrm{U}4^{-}\ \mathrm{O}4^{-}$ & $0$ & $2$ & $2$ \\ \hline
$4.6$ & $\mathrm{O}1^{-}\ \mathrm{O}2^{-}\ \mathrm{U}1^{-}\ \mathrm{U}2^{-}\ \mathrm{O}3^{+}\ \mathrm{U}4^{+}\ \mathrm{U}3^{+}\ \mathrm{O}4^{+}$ & $\mathrm{O}2^{-}\ \mathrm{U}2^{-}\ \mathrm{U}4^{+}\ \mathrm{O}4^{+}$ & $0$ & $1$ & $\leq 2$ \\ \hline
$4.53$ & $\mathrm{O}1^{-}\ \mathrm{O}2^{-}\ \mathrm{U}1^{-}\ \mathrm{O}3^{-}\ \mathrm{U}4^{-}\ \mathrm{U}3^{-}\ \mathrm{O}4^{-}\ \mathrm{U}2^{-}$ & $\mathrm{U}4^{-}\ \mathrm{O}4^{-}$ & $1$ & $2$ & $2$ \\ \hline
$4.54$ & $\mathrm{O}1^{-}\ \mathrm{O}2^{-}\ \mathrm{U}1^{-}\ \mathrm{O}3^{-}\ \mathrm{U}4^{+}\ \mathrm{U}3^{-}\ \mathrm{O}4^{+}\ \mathrm{U}2^{-}$ & $\mathrm{U}4^{+}\ \mathrm{O}4^{+}$ & $1$ & $1$ & $1$ \\ \hline
$4.73$ & $\mathrm{O}1^{-}\ \mathrm{O}2^{-}\ \mathrm{U}1^{-}\ \mathrm{U}3^{-}\ \mathrm{O}4^{-}\ \mathrm{O}3^{-}\ \mathrm{U}4^{-}\ \mathrm{U}2^{-}$ & $\mathrm{O}4^{-}\ \mathrm{U}4^{-}$ & $1$ & $2$ & $2$ \\ \hline
$4.74$ & $\mathrm{O}1^{-}\ \mathrm{O}2^{-}\ \mathrm{U}1^{-}\ \mathrm{U}3^{-}\ \mathrm{O}4^{+}\ \mathrm{O}3^{-}\ \mathrm{U}4^{+}\ \mathrm{U}2^{-}$ & $\mathrm{O}4^{+}\ \mathrm{U}4^{+}$ & $1$ & $1$ & $1$ \\ \hline
$4.75$ & $\mathrm{O}1^{-}\ \mathrm{O}2^{-}\ \mathrm{U}1^{-}\ \mathrm{U}3^{+}\ \mathrm{O}4^{+}\ \mathrm{O}3^{+}\ \mathrm{U}4^{+}\ \mathrm{U}2^{-}$ & $\mathrm{O}4^{+}\ \mathrm{U}4^{+}$ & $1$ & $0$ & $\leq 2$ \\ \hline
$5.37$ & $\mathrm{O}1^{-}\ \mathrm{O}2^{-}\ \mathrm{U}1^{-}\ \mathrm{U}2^{-}\ \mathrm{O}3^{-}\ \mathrm{O}4^{-}\ \mathrm{U}5^{-}\ \mathrm{U}4^{-}\ \mathrm{O}5^{-}\ \mathrm{U}3^{-}$ & $\mathrm{O}2^{-}\ \mathrm{U}2^{-}\ \mathrm{U}5^{-}\ \mathrm{O}5^{-}$ & $1$ & $2$ & $2$ \\ \hline
$5.39$ & $\mathrm{O}1^{-}\ \mathrm{O}2^{-}\ \mathrm{U}1^{-}\ \mathrm{U}2^{-}\ \mathrm{O}3^{-}\ \mathrm{O}4^{+}\ \mathrm{U}5^{+}\ \mathrm{U}4^{+}\ \mathrm{O}5^{+}\ \mathrm{U}3^{-}$ & $\mathrm{O}2^{-}\ \mathrm{U}2^{-}\ \mathrm{U}5^{+}\ \mathrm{O}5^{+}$ & $1$ & $1$ & $\leq 2$ \\ \hline
$5.40$ & $\mathrm{O}1^{-}\ \mathrm{O}2^{-}\ \mathrm{U}1^{-}\ \mathrm{U}2^{-}\ \mathrm{O}3^{+}\ \mathrm{O}4^{-}\ \mathrm{U}5^{-}\ \mathrm{U}4^{-}\ \mathrm{O}5^{-}\ \mathrm{U}3^{+}$ & $\mathrm{O}2^{-}\ \mathrm{U}2^{-}\ \mathrm{U}5^{-}\ \mathrm{O}5^{-}$ & $1$ & $[1,2]$ & $\leq 2$ \\ \hline
$5.42$ & $\mathrm{O}1^{-}\ \mathrm{O}2^{-}\ \mathrm{U}1^{-}\ \mathrm{U}2^{-}\ \mathrm{O}3^{+}\ \mathrm{O}4^{+}\ \mathrm{U}5^{+}\ \mathrm{U}4^{+}\ \mathrm{O}5^{+}\ \mathrm{U}3^{+}$ & $\mathrm{O}2^{-}\ \mathrm{U}2^{-}\ \mathrm{U}5^{+}\ \mathrm{O}5^{+}$ & $1$ & $1$ & $\leq 2$ \\ \hline
$5.104$ & $\mathrm{O}1^{-}\ \mathrm{O}2^{-}\ \mathrm{U}1^{-}\ \mathrm{U}2^{-}\ \mathrm{U}3^{-}\ \mathrm{O}4^{-}\ \mathrm{U}5^{-}\ \mathrm{U}4^{-}\ \mathrm{O}5^{-}\ \mathrm{O}3^{-}$ & $\mathrm{O}2^{-}\ \mathrm{U}2^{-}\ \mathrm{U}5^{-}\ \mathrm{O}5^{-}$ & $0$ & $2$ & $2$ \\ \hline
$5.106$ & $\mathrm{O}1^{-}\ \mathrm{O}2^{-}\ \mathrm{U}1^{-}\ \mathrm{U}2^{-}\ \mathrm{U}3^{-}\ \mathrm{O}4^{+}\ \mathrm{U}5^{+}\ \mathrm{U}4^{+}\ \mathrm{O}5^{+}\ \mathrm{O}3^{-}$ & $\mathrm{O}2^{-}\ \mathrm{U}2^{-}\ \mathrm{U}5^{+}\ \mathrm{O}5^{+}$ & $0$ & $1$ & $\leq 2$ \\ \hline
$5.107$ & $\mathrm{O}1^{-}\ \mathrm{O}2^{-}\ \mathrm{U}1^{-}\ \mathrm{U}2^{-}\ \mathrm{U}3^{+}\ \mathrm{O}4^{-}\ \mathrm{U}5^{-}\ \mathrm{U}4^{-}\ \mathrm{O}5^{-}\ \mathrm{O}3^{+}$ & $\mathrm{O}2^{-}\ \mathrm{U}2^{-}\ \mathrm{U}5^{-}\ \mathrm{O}5^{-}$ & $0$ & $[1,2]$ & $\leq 2$ \\ \hline
$5.109$ & $\mathrm{O}1^{-}\ \mathrm{O}2^{-}\ \mathrm{U}1^{-}\ \mathrm{U}2^{-}\ \mathrm{U}3^{+}\ \mathrm{O}4^{+}\ \mathrm{U}5^{+}\ \mathrm{U}4^{+}\ \mathrm{O}5^{+}\ \mathrm{O}3^{+}$ & $\mathrm{O}2^{-}\ \mathrm{U}2^{-}\ \mathrm{U}5^{+}\ \mathrm{O}5^{+}$ & $0$ & $1$ & $\leq 2$ \\ \hline
$5.877$ & $\mathrm{O}1^{-}\ \mathrm{O}2^{-}\ \mathrm{U}1^{-}\ \mathrm{O}3^{-}\ \mathrm{O}4^{-}\ \mathrm{U}5^{-}\ \mathrm{U}4^{-}\ \mathrm{O}5^{-}\ \mathrm{U}3^{-}\ \mathrm{U}2^{-}$ & $\mathrm{U}5^{-}\ \mathrm{O}5^{-}$ & $1$ & $2$ & $2$ \\ \hline
$5.878$ & $\mathrm{O}1^{-}\ \mathrm{O}2^{-}\ \mathrm{U}1^{-}\ \mathrm{O}3^{-}\ \mathrm{O}4^{+}\ \mathrm{U}5^{-}\ \mathrm{U}4^{+}\ \mathrm{O}5^{-}\ \mathrm{U}3^{-}\ \mathrm{U}2^{-}$ & $\mathrm{U}5^{-}\ \mathrm{O}5^{-}$ & $1$ & $1$ & $1$ \\ \hline
$5.879$ & $\mathrm{O}1^{-}\ \mathrm{O}2^{-}\ \mathrm{U}1^{-}\ \mathrm{O}3^{-}\ \mathrm{O}4^{+}\ \mathrm{U}5^{+}\ \mathrm{U}4^{+}\ \mathrm{O}5^{+}\ \mathrm{U}3^{-}\ \mathrm{U}2^{-}$ & $\mathrm{U}5^{+}\ \mathrm{O}5^{+}$ & $1$ & $1$ & $\leq 2$ \\ \hline
$5.880$ & $\mathrm{O}1^{-}\ \mathrm{O}2^{-}\ \mathrm{U}1^{-}\ \mathrm{O}3^{+}\ \mathrm{O}4^{-}\ \mathrm{U}5^{-}\ \mathrm{U}4^{-}\ \mathrm{O}5^{-}\ \mathrm{U}3^{+}\ \mathrm{U}2^{-}$ & $\mathrm{U}5^{-}\ \mathrm{O}5^{-}$ & $1$ & $[1,2]$ & $\leq 2$ \\ \hline
$5.881$ & $\mathrm{O}1^{-}\ \mathrm{O}2^{-}\ \mathrm{U}1^{-}\ \mathrm{O}3^{+}\ \mathrm{O}4^{-}\ \mathrm{U}5^{+}\ \mathrm{U}4^{-}\ \mathrm{O}5^{+}\ \mathrm{U}3^{+}\ \mathrm{U}2^{-}$ & $\mathrm{U}5^{+}\ \mathrm{O}5^{+}$ & $1$ & $1$ & $1$ \\ \hline
$5.1234$ & $\mathrm{O}1^{-}\ \mathrm{O}2^{-}\ \mathrm{U}1^{-}\ \mathrm{O}3^{-}\ \mathrm{U}4^{-}\ \mathrm{O}5^{-}\ \mathrm{O}4^{-}\ \mathrm{U}5^{-}\ \mathrm{U}3^{-}\ \mathrm{U}2^{-}$ & $\mathrm{O}5^{-}\ \mathrm{U}5^{-}$ & $1$ & $2$ & $2$ \\ \hline
$5.1235$ & $\mathrm{O}1^{-}\ \mathrm{O}2^{-}\ \mathrm{U}1^{-}\ \mathrm{O}3^{-}\ \mathrm{U}4^{-}\ \mathrm{O}5^{+}\ \mathrm{O}4^{-}\ \mathrm{U}5^{+}\ \mathrm{U}3^{-}\ \mathrm{U}2^{-}$ & $\mathrm{O}5^{+}\ \mathrm{U}5^{+}$ & $1$ & $1$ & $1$ \\ \hline
$5.1236$ & $\mathrm{O}1^{-}\ \mathrm{O}2^{-}\ \mathrm{U}1^{-}\ \mathrm{O}3^{-}\ \mathrm{U}4^{+}\ \mathrm{O}5^{-}\ \mathrm{O}4^{+}\ \mathrm{U}5^{-}\ \mathrm{U}3^{-}\ \mathrm{U}2^{-}$ & $\mathrm{O}5^{-}\ \mathrm{U}5^{-}$ & $1$ & $1$ & $1$ \\ \hline
$5.1237$ & $\mathrm{O}1^{-}\ \mathrm{O}2^{-}\ \mathrm{U}1^{-}\ \mathrm{O}3^{-}\ \mathrm{U}4^{+}\ \mathrm{O}5^{+}\ \mathrm{O}4^{+}\ \mathrm{U}5^{+}\ \mathrm{U}3^{-}\ \mathrm{U}2^{-}$ & $\mathrm{O}5^{+}\ \mathrm{U}5^{+}$ & $1$ & $1$ & $\leq 2$ \\ \hline
$5.1238$ & $\mathrm{O}1^{-}\ \mathrm{O}2^{-}\ \mathrm{U}1^{-}\ \mathrm{O}3^{+}\ \mathrm{U}4^{-}\ \mathrm{O}5^{-}\ \mathrm{O}4^{-}\ \mathrm{U}5^{-}\ \mathrm{U}3^{+}\ \mathrm{U}2^{-}$ & $\mathrm{O}5^{-}\ \mathrm{U}5^{-}$ & $1$ & $[1,2]$ & $\leq 2$ \\ \hline
$5.1239$ & $\mathrm{O}1^{-}\ \mathrm{O}2^{-}\ \mathrm{U}1^{-}\ \mathrm{O}3^{+}\ \mathrm{U}4^{-}\ \mathrm{O}5^{+}\ \mathrm{O}4^{-}\ \mathrm{U}5^{+}\ \mathrm{U}3^{+}\ \mathrm{U}2^{-}$ & $\mathrm{O}5^{+}\ \mathrm{U}5^{+}$ & $1$ & $1$ & $1$ \\ \hline
$5.1240$ & $\mathrm{O}1^{-}\ \mathrm{O}2^{-}\ \mathrm{U}1^{-}\ \mathrm{O}3^{+}\ \mathrm{U}4^{+}\ \mathrm{O}5^{-}\ \mathrm{O}4^{+}\ \mathrm{U}5^{-}\ \mathrm{U}3^{+}\ \mathrm{U}2^{-}$ & $\mathrm{O}5^{-}\ \mathrm{U}5^{-}$ & $1$ & $1$ & $1$ \\ \hline
$6.132$ & $\mathrm{O}1^{-}\ \mathrm{O}2^{-}\ \mathrm{U}1^{-}\ \mathrm{U}2^{-}\ \mathrm{O}3^{-}\ \mathrm{O}4^{+}\ \mathrm{U}3^{-}\ \mathrm{O}5^{-}\ \mathrm{U}6^{-}\ \mathrm{U}5^{-}\ \mathrm{O}6^{-}\ \mathrm{U}4^{+}$ & $\mathrm{O}2^{-}\ \mathrm{U}2^{-}\ \mathrm{U}6^{-}\ \mathrm{O}6^{-}$ & $1$ & $[1,2]$ & $\leq 2$ \\ \hline
$6.135$ & $\mathrm{O}1^{-}\ \mathrm{O}2^{-}\ \mathrm{U}1^{-}\ \mathrm{U}2^{-}\ \mathrm{O}3^{-}\ \mathrm{O}4^{+}\ \mathrm{U}3^{-}\ \mathrm{O}5^{+}\ \mathrm{U}6^{+}\ \mathrm{U}5^{+}\ \mathrm{O}6^{+}\ \mathrm{U}4^{+}$ & $\mathrm{O}2^{-}\ \mathrm{U}2^{-}\ \mathrm{U}6^{+}\ \mathrm{O}6^{+}$ & $1$ & $[1,2]$ & $\leq 2$ \\ \hline
$6.136$ & $\mathrm{O}1^{-}\ \mathrm{O}2^{-}\ \mathrm{U}1^{-}\ \mathrm{U}2^{-}\ \mathrm{O}3^{+}\ \mathrm{O}4^{-}\ \mathrm{U}3^{+}\ \mathrm{O}5^{-}\ \mathrm{U}6^{-}\ \mathrm{U}5^{-}\ \mathrm{O}6^{-}\ \mathrm{U}4^{-}$ & $\mathrm{O}2^{-}\ \mathrm{U}2^{-}\ \mathrm{U}6^{-}\ \mathrm{O}6^{-}$ & $1$ & $[1,2]$ & $\leq 2$ \\ \hline
$6.139$ & $\mathrm{O}1^{-}\ \mathrm{O}2^{-}\ \mathrm{U}1^{-}\ \mathrm{U}2^{-}\ \mathrm{O}3^{+}\ \mathrm{O}4^{-}\ \mathrm{U}3^{+}\ \mathrm{O}5^{+}\ \mathrm{U}6^{+}\ \mathrm{U}5^{+}\ \mathrm{O}6^{+}\ \mathrm{U}4^{-}$ & $\mathrm{O}2^{-}\ \mathrm{U}2^{-}\ \mathrm{U}6^{+}\ \mathrm{O}6^{+}$ & $1$ & $[1,2]$ & $\leq 2$ \\ \hline
$6.228$ & $\mathrm{O}1^{-}\ \mathrm{O}2^{-}\ \mathrm{U}1^{-}\ \mathrm{U}2^{-}\ \mathrm{O}3^{-}\ \mathrm{O}4^{+}\ \mathrm{U}3^{-}\ \mathrm{U}5^{-}\ \mathrm{O}6^{-}\ \mathrm{O}5^{-}\ \mathrm{U}6^{-}\ \mathrm{U}4^{+}$ & $\mathrm{O}2^{-}\ \mathrm{U}2^{-}\ \mathrm{O}6^{-}\ \mathrm{U}6^{-}$ & $1$ & $[1,2]$ & $\leq 2$ \\ \hline
$6.231$ & $\mathrm{O}1^{-}\ \mathrm{O}2^{-}\ \mathrm{U}1^{-}\ \mathrm{U}2^{-}\ \mathrm{O}3^{-}\ \mathrm{O}4^{+}\ \mathrm{U}3^{-}\ \mathrm{U}5^{+}\ \mathrm{O}6^{+}\ \mathrm{O}5^{+}\ \mathrm{U}6^{+}\ \mathrm{U}4^{+}$ & $\mathrm{O}2^{-}\ \mathrm{U}2^{-}\ \mathrm{O}6^{+}\ \mathrm{U}6^{+}$ & $1$ & $[1,2]$ & $\leq 2$ \\ \hline
$6.232$ & $\mathrm{O}1^{-}\ \mathrm{O}2^{-}\ \mathrm{U}1^{-}\ \mathrm{U}2^{-}\ \mathrm{O}3^{+}\ \mathrm{O}4^{-}\ \mathrm{U}3^{+}\ \mathrm{U}5^{-}\ \mathrm{O}6^{-}\ \mathrm{O}5^{-}\ \mathrm{U}6^{-}\ \mathrm{U}4^{-}$ & $\mathrm{O}2^{-}\ \mathrm{U}2^{-}\ \mathrm{O}6^{-}\ \mathrm{U}6^{-}$ & $1$ & $[1,2]$ & $\leq 2$ \\ \hline
$6.235$ & $\mathrm{O}1^{-}\ \mathrm{O}2^{-}\ \mathrm{U}1^{-}\ \mathrm{U}2^{-}\ \mathrm{O}3^{+}\ \mathrm{O}4^{-}\ \mathrm{U}3^{+}\ \mathrm{U}5^{+}\ \mathrm{O}6^{+}\ \mathrm{O}5^{+}\ \mathrm{U}6^{+}\ \mathrm{U}4^{-}$ & $\mathrm{O}2^{-}\ \mathrm{U}2^{-}\ \mathrm{O}6^{+}\ \mathrm{U}6^{+}$ & $1$ & $[1,2]$ & $\leq 2$ \\ \hline
$6.650$ & $\mathrm{O}1^{-}\ \mathrm{O}2^{-}\ \mathrm{U}1^{-}\ \mathrm{U}2^{-}\ \mathrm{O}3^{-}\ \mathrm{O}4^{-}\ \mathrm{O}5^{-}\ \mathrm{U}6^{-}\ \mathrm{U}5^{-}\ \mathrm{O}6^{-}\ \mathrm{U}4^{-}\ \mathrm{U}3^{-}$ & $\mathrm{O}2^{-}\ \mathrm{U}2^{-}\ \mathrm{U}6^{-}\ \mathrm{O}6^{-}$ & $1$ & $2$ & $2$ \\ \hline
$6.652$ & $\mathrm{O}1^{-}\ \mathrm{O}2^{-}\ \mathrm{U}1^{-}\ \mathrm{U}2^{-}\ \mathrm{O}3^{-}\ \mathrm{O}4^{-}\ \mathrm{O}5^{+}\ \mathrm{U}6^{+}\ \mathrm{U}5^{+}\ \mathrm{O}6^{+}\ \mathrm{U}4^{-}\ \mathrm{U}3^{-}$ & $\mathrm{O}2^{-}\ \mathrm{U}2^{-}\ \mathrm{U}6^{+}\ \mathrm{O}6^{+}$ & $1$ & $[1,2]$ & $\leq 2$ \\ \hline
$6.653$ & $\mathrm{O}1^{-}\ \mathrm{O}2^{-}\ \mathrm{U}1^{-}\ \mathrm{U}2^{-}\ \mathrm{O}3^{+}\ \mathrm{O}4^{+}\ \mathrm{O}5^{-}\ \mathrm{U}6^{-}\ \mathrm{U}5^{-}\ \mathrm{O}6^{-}\ \mathrm{U}4^{+}\ \mathrm{U}3^{+}$ & $\mathrm{O}2^{-}\ \mathrm{U}2^{-}\ \mathrm{U}6^{-}\ \mathrm{O}6^{-}$ & $1$ & $[1,2]$ & $\leq 2$ \\ \hline
$6.655$ & $\mathrm{O}1^{-}\ \mathrm{O}2^{-}\ \mathrm{U}1^{-}\ \mathrm{U}2^{-}\ \mathrm{O}3^{+}\ \mathrm{O}4^{+}\ \mathrm{O}5^{+}\ \mathrm{U}6^{+}\ \mathrm{U}5^{+}\ \mathrm{O}6^{+}\ \mathrm{U}4^{+}\ \mathrm{U}3^{+}$ & $\mathrm{O}2^{-}\ \mathrm{U}2^{-}\ \mathrm{U}6^{+}\ \mathrm{O}6^{+}$ & $1$ & $[1,2]$ & $\leq 2$ \\ \hline
$6.1892$ & $\mathrm{O}1^{-}\ \mathrm{O}2^{-}\ \mathrm{U}1^{-}\ \mathrm{U}2^{-}\ \mathrm{O}3^{-}\ \mathrm{U}4^{-}\ \mathrm{O}5^{-}\ \mathrm{U}6^{-}\ \mathrm{U}5^{-}\ \mathrm{O}6^{-}\ \mathrm{O}4^{-}\ \mathrm{U}3^{-}$ & $\mathrm{O}2^{-}\ \mathrm{U}2^{-}\ \mathrm{U}6^{-}\ \mathrm{O}6^{-}$ & $1$ & $2$ & $2$ \\ \hline
$6.1894$ & $\mathrm{O}1^{-}\ \mathrm{O}2^{-}\ \mathrm{U}1^{-}\ \mathrm{U}2^{-}\ \mathrm{O}3^{-}\ \mathrm{U}4^{-}\ \mathrm{O}5^{+}\ \mathrm{U}6^{+}\ \mathrm{U}5^{+}\ \mathrm{O}6^{+}\ \mathrm{O}4^{-}\ \mathrm{U}3^{-}$ & $\mathrm{O}2^{-}\ \mathrm{U}2^{-}\ \mathrm{U}6^{+}\ \mathrm{O}6^{+}$ & $1$ & $[1,2]$ & $\leq 2$ \\ \hline
$6.1895$ & $\mathrm{O}1^{-}\ \mathrm{O}2^{-}\ \mathrm{U}1^{-}\ \mathrm{U}2^{-}\ \mathrm{O}3^{-}\ \mathrm{U}4^{+}\ \mathrm{O}5^{-}\ \mathrm{U}6^{-}\ \mathrm{U}5^{-}\ \mathrm{O}6^{-}\ \mathrm{O}4^{+}\ \mathrm{U}3^{-}$ & $\mathrm{O}2^{-}\ \mathrm{U}2^{-}\ \mathrm{U}6^{-}\ \mathrm{O}6^{-}$ & $1$ & $[1,2]$ & $\leq 2$ \\ \hline
$6.1897$ & $\mathrm{O}1^{-}\ \mathrm{O}2^{-}\ \mathrm{U}1^{-}\ \mathrm{U}2^{-}\ \mathrm{O}3^{-}\ \mathrm{U}4^{+}\ \mathrm{O}5^{+}\ \mathrm{U}6^{+}\ \mathrm{U}5^{+}\ \mathrm{O}6^{+}\ \mathrm{O}4^{+}\ \mathrm{U}3^{-}$ & $\mathrm{O}2^{-}\ \mathrm{U}2^{-}\ \mathrm{U}6^{+}\ \mathrm{O}6^{+}$ & $1$ & $[1,2]$ & $\leq 2$ \\ \hline
$6.1898$ & $\mathrm{O}1^{-}\ \mathrm{O}2^{-}\ \mathrm{U}1^{-}\ \mathrm{U}2^{-}\ \mathrm{O}3^{+}\ \mathrm{U}4^{-}\ \mathrm{O}5^{-}\ \mathrm{U}6^{-}\ \mathrm{U}5^{-}\ \mathrm{O}6^{-}\ \mathrm{O}4^{-}\ \mathrm{U}3^{+}$ & $\mathrm{O}2^{-}\ \mathrm{U}2^{-}\ \mathrm{U}6^{-}\ \mathrm{O}6^{-}$ & $1$ & $[1,2]$ & $\leq 2$ \\ \hline
$6.1900$ & $\mathrm{O}1^{-}\ \mathrm{O}2^{-}\ \mathrm{U}1^{-}\ \mathrm{U}2^{-}\ \mathrm{O}3^{+}\ \mathrm{U}4^{-}\ \mathrm{O}5^{+}\ \mathrm{U}6^{+}\ \mathrm{U}5^{+}\ \mathrm{O}6^{+}\ \mathrm{O}4^{-}\ \mathrm{U}3^{+}$ & $\mathrm{O}2^{-}\ \mathrm{U}2^{-}\ \mathrm{U}6^{+}\ \mathrm{O}6^{+}$ & $1$ & $[1,2]$ & $\leq 2$ \\ \hline
$6.1901$ & $\mathrm{O}1^{-}\ \mathrm{O}2^{-}\ \mathrm{U}1^{-}\ \mathrm{U}2^{-}\ \mathrm{O}3^{+}\ \mathrm{U}4^{+}\ \mathrm{O}5^{-}\ \mathrm{U}6^{-}\ \mathrm{U}5^{-}\ \mathrm{O}6^{-}\ \mathrm{O}4^{+}\ \mathrm{U}3^{+}$ & $\mathrm{O}2^{-}\ \mathrm{U}2^{-}\ \mathrm{U}6^{-}\ \mathrm{O}6^{-}$ & $1$ & $[1,2]$ & $\leq 2$ \\ \hline
$6.1903$ & $\mathrm{O}1^{-}\ \mathrm{O}2^{-}\ \mathrm{U}1^{-}\ \mathrm{U}2^{-}\ \mathrm{O}3^{+}\ \mathrm{U}4^{+}\ \mathrm{O}5^{+}\ \mathrm{U}6^{+}\ \mathrm{U}5^{+}\ \mathrm{O}6^{+}\ \mathrm{O}4^{+}\ \mathrm{U}3^{+}$ & $\mathrm{O}2^{-}\ \mathrm{U}2^{-}\ \mathrm{U}6^{+}\ \mathrm{O}6^{+}$ & $1$ & $[1,2]$ & $\leq 2$ \\ \hline
$6.2601$ & $\mathrm{O}1^{-}\ \mathrm{O}2^{-}\ \mathrm{U}1^{-}\ \mathrm{U}2^{-}\ \mathrm{U}3^{-}\ \mathrm{O}4^{-}\ \mathrm{O}5^{-}\ \mathrm{U}6^{-}\ \mathrm{U}5^{-}\ \mathrm{O}6^{-}\ \mathrm{U}4^{-}\ \mathrm{O}3^{-}$ & $\mathrm{O}2^{-}\ \mathrm{U}2^{-}\ \mathrm{U}6^{-}\ \mathrm{O}6^{-}$ & $1$ & $2$ & $2$ \\ \hline
$6.2603$ & $\mathrm{O}1^{-}\ \mathrm{O}2^{-}\ \mathrm{U}1^{-}\ \mathrm{U}2^{-}\ \mathrm{U}3^{-}\ \mathrm{O}4^{-}\ \mathrm{O}5^{+}\ \mathrm{U}6^{+}\ \mathrm{U}5^{+}\ \mathrm{O}6^{+}\ \mathrm{U}4^{-}\ \mathrm{O}3^{-}$ & $\mathrm{O}2^{-}\ \mathrm{U}2^{-}\ \mathrm{U}6^{+}\ \mathrm{O}6^{+}$ & $1$ & $[1,2]$ & $\leq 2$ \\ \hline
$6.2604$ & $\mathrm{O}1^{-}\ \mathrm{O}2^{-}\ \mathrm{U}1^{-}\ \mathrm{U}2^{-}\ \mathrm{U}3^{-}\ \mathrm{O}4^{+}\ \mathrm{O}5^{-}\ \mathrm{U}6^{-}\ \mathrm{U}5^{-}\ \mathrm{O}6^{-}\ \mathrm{U}4^{+}\ \mathrm{O}3^{-}$ & $\mathrm{O}2^{-}\ \mathrm{U}2^{-}\ \mathrm{U}6^{-}\ \mathrm{O}6^{-}$ & $1$ & $[1,2]$ & $\leq 2$ \\ \hline
$6.2606$ & $\mathrm{O}1^{-}\ \mathrm{O}2^{-}\ \mathrm{U}1^{-}\ \mathrm{U}2^{-}\ \mathrm{U}3^{-}\ \mathrm{O}4^{+}\ \mathrm{O}5^{+}\ \mathrm{U}6^{+}\ \mathrm{U}5^{+}\ \mathrm{O}6^{+}\ \mathrm{U}4^{+}\ \mathrm{O}3^{-}$ & $\mathrm{O}2^{-}\ \mathrm{U}2^{-}\ \mathrm{U}6^{+}\ \mathrm{O}6^{+}$ & $1$ & $[1,2]$ & $\leq 2$ \\ \hline
$6.2607$ & $\mathrm{O}1^{-}\ \mathrm{O}2^{-}\ \mathrm{U}1^{-}\ \mathrm{U}2^{-}\ \mathrm{U}3^{+}\ \mathrm{O}4^{-}\ \mathrm{O}5^{-}\ \mathrm{U}6^{-}\ \mathrm{U}5^{-}\ \mathrm{O}6^{-}\ \mathrm{U}4^{-}\ \mathrm{O}3^{+}$ & $\mathrm{O}2^{-}\ \mathrm{U}2^{-}\ \mathrm{U}6^{-}\ \mathrm{O}6^{-}$ & $1$ & $[1,2]$ & $\leq 2$ \\ \hline
$6.2609$ & $\mathrm{O}1^{-}\ \mathrm{O}2^{-}\ \mathrm{U}1^{-}\ \mathrm{U}2^{-}\ \mathrm{U}3^{+}\ \mathrm{O}4^{-}\ \mathrm{O}5^{+}\ \mathrm{U}6^{+}\ \mathrm{U}5^{+}\ \mathrm{O}6^{+}\ \mathrm{U}4^{-}\ \mathrm{O}3^{+}$ & $\mathrm{O}2^{-}\ \mathrm{U}2^{-}\ \mathrm{U}6^{+}\ \mathrm{O}6^{+}$ & $1$ & $[1,2]$ & $\leq 2$ \\ \hline
$6.2610$ & $\mathrm{O}1^{-}\ \mathrm{O}2^{-}\ \mathrm{U}1^{-}\ \mathrm{U}2^{-}\ \mathrm{U}3^{+}\ \mathrm{O}4^{+}\ \mathrm{O}5^{-}\ \mathrm{U}6^{-}\ \mathrm{U}5^{-}\ \mathrm{O}6^{-}\ \mathrm{U}4^{+}\ \mathrm{O}3^{+}$ & $\mathrm{O}2^{-}\ \mathrm{U}2^{-}\ \mathrm{U}6^{-}\ \mathrm{O}6^{-}$ & $1$ & $[1,2]$ & $\leq 2$ \\ \hline
$6.2612$ & $\mathrm{O}1^{-}\ \mathrm{O}2^{-}\ \mathrm{U}1^{-}\ \mathrm{U}2^{-}\ \mathrm{U}3^{+}\ \mathrm{O}4^{+}\ \mathrm{O}5^{+}\ \mathrm{U}6^{+}\ \mathrm{U}5^{+}\ \mathrm{O}6^{+}\ \mathrm{U}4^{+}\ \mathrm{O}3^{+}$ & $\mathrm{O}2^{-}\ \mathrm{U}2^{-}\ \mathrm{U}6^{+}\ \mathrm{O}6^{+}$ & $1$ & $[1,2]$ & $\leq 2$ \\ \hline
$6.2684$ & $\mathrm{O}1^{-}\ \mathrm{O}2^{-}\ \mathrm{U}1^{-}\ \mathrm{U}2^{-}\ \mathrm{U}3^{-}\ \mathrm{O}4^{-}\ \mathrm{U}5^{-}\ \mathrm{U}4^{-}\ \mathrm{O}5^{-}\ \mathrm{U}6^{+}\ \mathrm{O}3^{-}\ \mathrm{O}6^{+}$ & $\mathrm{O}2^{-}\ \mathrm{U}2^{-}\ \mathrm{U}5^{-}\ \mathrm{O}5^{-}\ \mathrm{U}6^{+}\ \mathrm{O}6^{+}$ & $1$ & $[1,2]$ & $\leq 2$ \\ \hline
$6.2690$ & $\mathrm{O}1^{-}\ \mathrm{O}2^{-}\ \mathrm{U}1^{-}\ \mathrm{U}2^{-}\ \mathrm{U}3^{-}\ \mathrm{O}4^{+}\ \mathrm{U}5^{+}\ \mathrm{U}4^{+}\ \mathrm{O}5^{+}\ \mathrm{U}6^{+}\ \mathrm{O}3^{-}\ \mathrm{O}6^{+}$ & $\mathrm{O}2^{-}\ \mathrm{U}2^{-}\ \mathrm{U}5^{+}\ \mathrm{O}5^{+}\ \mathrm{U}6^{+}\ \mathrm{O}6^{+}$ & $1$ & $[1,2]$ & $\leq 2$ \\ \hline
$6.2691$ & $\mathrm{O}1^{-}\ \mathrm{O}2^{-}\ \mathrm{U}1^{-}\ \mathrm{U}2^{-}\ \mathrm{U}3^{+}\ \mathrm{O}4^{-}\ \mathrm{U}5^{-}\ \mathrm{U}4^{-}\ \mathrm{O}5^{-}\ \mathrm{U}6^{-}\ \mathrm{O}3^{+}\ \mathrm{O}6^{-}$ & $\mathrm{O}2^{-}\ \mathrm{U}2^{-}\ \mathrm{U}5^{-}\ \mathrm{O}5^{-}\ \mathrm{U}6^{-}\ \mathrm{O}6^{-}$ & $1$ & $[1,2]$ & $\leq 2$ \\ \hline
$6.2697$ & $\mathrm{O}1^{-}\ \mathrm{O}2^{-}\ \mathrm{U}1^{-}\ \mathrm{U}2^{-}\ \mathrm{U}3^{+}\ \mathrm{O}4^{+}\ \mathrm{U}5^{+}\ \mathrm{U}4^{+}\ \mathrm{O}5^{+}\ \mathrm{U}6^{-}\ \mathrm{O}3^{+}\ \mathrm{O}6^{-}$ & $\mathrm{O}2^{-}\ \mathrm{U}2^{-}\ \mathrm{U}5^{+}\ \mathrm{O}5^{+}\ \mathrm{U}6^{-}\ \mathrm{O}6^{-}$ & $1$ & $[1,2]$ & $\leq 2$ \\ \hline
$6.2943$ & $\mathrm{O}1^{-}\ \mathrm{O}2^{-}\ \mathrm{U}1^{-}\ \mathrm{U}2^{-}\ \mathrm{U}3^{-}\ \mathrm{U}4^{+}\ \mathrm{O}3^{-}\ \mathrm{O}5^{-}\ \mathrm{U}6^{-}\ \mathrm{U}5^{-}\ \mathrm{O}6^{-}\ \mathrm{O}4^{+}$ & $\mathrm{O}2^{-}\ \mathrm{U}2^{-}\ \mathrm{U}6^{-}\ \mathrm{O}6^{-}$ & $1$ & $[1,2]$ & $\leq 2$ \\ \hline
$6.2946$ & $\mathrm{O}1^{-}\ \mathrm{O}2^{-}\ \mathrm{U}1^{-}\ \mathrm{U}2^{-}\ \mathrm{U}3^{-}\ \mathrm{U}4^{+}\ \mathrm{O}3^{-}\ \mathrm{O}5^{+}\ \mathrm{U}6^{+}\ \mathrm{U}5^{+}\ \mathrm{O}6^{+}\ \mathrm{O}4^{+}$ & $\mathrm{O}2^{-}\ \mathrm{U}2^{-}\ \mathrm{U}6^{+}\ \mathrm{O}6^{+}$ & $1$ & $[1,2]$ & $\leq 2$ \\ \hline
$6.2947$ & $\mathrm{O}1^{-}\ \mathrm{O}2^{-}\ \mathrm{U}1^{-}\ \mathrm{U}2^{-}\ \mathrm{U}3^{+}\ \mathrm{U}4^{-}\ \mathrm{O}3^{+}\ \mathrm{O}5^{-}\ \mathrm{U}6^{-}\ \mathrm{U}5^{-}\ \mathrm{O}6^{-}\ \mathrm{O}4^{-}$ & $\mathrm{O}2^{-}\ \mathrm{U}2^{-}\ \mathrm{U}6^{-}\ \mathrm{O}6^{-}$ & $1$ & $[1,2]$ & $\leq 2$ \\ \hline
$6.2950$ & $\mathrm{O}1^{-}\ \mathrm{O}2^{-}\ \mathrm{U}1^{-}\ \mathrm{U}2^{-}\ \mathrm{U}3^{+}\ \mathrm{U}4^{-}\ \mathrm{O}3^{+}\ \mathrm{O}5^{+}\ \mathrm{U}6^{+}\ \mathrm{U}5^{+}\ \mathrm{O}6^{+}\ \mathrm{O}4^{-}$ & $\mathrm{O}2^{-}\ \mathrm{U}2^{-}\ \mathrm{U}6^{+}\ \mathrm{O}6^{+}$ & $1$ & $[1,2]$ & $\leq 2$ \\ \hline
$6.3097$ & $\mathrm{O}1^{-}\ \mathrm{O}2^{-}\ \mathrm{U}1^{-}\ \mathrm{U}2^{-}\ \mathrm{U}3^{-}\ \mathrm{U}4^{-}\ \mathrm{O}5^{-}\ \mathrm{U}6^{-}\ \mathrm{U}5^{-}\ \mathrm{O}6^{-}\ \mathrm{O}4^{-}\ \mathrm{O}3^{-}$ & $\mathrm{O}2^{-}\ \mathrm{U}2^{-}\ \mathrm{U}6^{-}\ \mathrm{O}6^{-}$ & $0$ & $2$ & $2$ \\ \hline
$6.3099$ & $\mathrm{O}1^{-}\ \mathrm{O}2^{-}\ \mathrm{U}1^{-}\ \mathrm{U}2^{-}\ \mathrm{U}3^{-}\ \mathrm{U}4^{-}\ \mathrm{O}5^{+}\ \mathrm{U}6^{+}\ \mathrm{U}5^{+}\ \mathrm{O}6^{+}\ \mathrm{O}4^{-}\ \mathrm{O}3^{-}$ & $\mathrm{O}2^{-}\ \mathrm{U}2^{-}\ \mathrm{U}6^{+}\ \mathrm{O}6^{+}$ & $0$ & $[1,2]$ & $\leq 2$ \\ \hline
$6.3100$ & $\mathrm{O}1^{-}\ \mathrm{O}2^{-}\ \mathrm{U}1^{-}\ \mathrm{U}2^{-}\ \mathrm{U}3^{+}\ \mathrm{U}4^{+}\ \mathrm{O}5^{-}\ \mathrm{U}6^{-}\ \mathrm{U}5^{-}\ \mathrm{O}6^{-}\ \mathrm{O}4^{+}\ \mathrm{O}3^{+}$ & $\mathrm{O}2^{-}\ \mathrm{U}2^{-}\ \mathrm{U}6^{-}\ \mathrm{O}6^{-}$ & $0$ & $[1,2]$ & $\leq 2$ \\ \hline
$6.3102$ & $\mathrm{O}1^{-}\ \mathrm{O}2^{-}\ \mathrm{U}1^{-}\ \mathrm{U}2^{-}\ \mathrm{U}3^{+}\ \mathrm{U}4^{+}\ \mathrm{O}5^{+}\ \mathrm{U}6^{+}\ \mathrm{U}5^{+}\ \mathrm{O}6^{+}\ \mathrm{O}4^{+}\ \mathrm{O}3^{+}$ & $\mathrm{O}2^{-}\ \mathrm{U}2^{-}\ \mathrm{U}6^{+}\ \mathrm{O}6^{+}$ & $0$ & $[1,2]$ & $\leq 2$ \\ \hline
$6.14824$ & $\mathrm{O}1^{-}\ \mathrm{O}2^{-}\ \mathrm{U}1^{-}\ \mathrm{O}3^{-}\ \mathrm{O}4^{+}\ \mathrm{U}3^{-}\ \mathrm{O}5^{-}\ \mathrm{U}6^{-}\ \mathrm{U}5^{-}\ \mathrm{O}6^{-}\ \mathrm{U}4^{+}\ \mathrm{U}2^{-}$ & $\mathrm{U}6^{-}\ \mathrm{O}6^{-}$ & $1$ & $[1,2]$ & $\leq 2$ \\ \hline
$6.14827$ & $\mathrm{O}1^{-}\ \mathrm{O}2^{-}\ \mathrm{U}1^{-}\ \mathrm{O}3^{-}\ \mathrm{O}4^{+}\ \mathrm{U}3^{-}\ \mathrm{O}5^{+}\ \mathrm{U}6^{+}\ \mathrm{U}5^{+}\ \mathrm{O}6^{+}\ \mathrm{U}4^{+}\ \mathrm{U}2^{-}$ & $\mathrm{U}6^{+}\ \mathrm{O}6^{+}$ & $1$ & $[1,2]$ & $\leq 2$ \\ \hline
$6.14828$ & $\mathrm{O}1^{-}\ \mathrm{O}2^{-}\ \mathrm{U}1^{-}\ \mathrm{O}3^{+}\ \mathrm{O}4^{-}\ \mathrm{U}3^{+}\ \mathrm{O}5^{-}\ \mathrm{U}6^{-}\ \mathrm{U}5^{-}\ \mathrm{O}6^{-}\ \mathrm{U}4^{-}\ \mathrm{U}2^{-}$ & $\mathrm{U}6^{-}\ \mathrm{O}6^{-}$ & $1$ & $[1,2]$ & $\leq 2$ \\ \hline
$6.14831$ & $\mathrm{O}1^{-}\ \mathrm{O}2^{-}\ \mathrm{U}1^{-}\ \mathrm{O}3^{+}\ \mathrm{O}4^{-}\ \mathrm{U}3^{+}\ \mathrm{O}5^{+}\ \mathrm{U}6^{+}\ \mathrm{U}5^{+}\ \mathrm{O}6^{+}\ \mathrm{U}4^{-}\ \mathrm{U}2^{-}$ & $\mathrm{U}6^{+}\ \mathrm{O}6^{+}$ & $1$ & $[1,2]$ & $\leq 2$ \\ \hline
$6.14840$ & $\mathrm{O}1^{-}\ \mathrm{O}2^{+}\ \mathrm{U}1^{-}\ \mathrm{O}3^{-}\ \mathrm{O}4^{+}\ \mathrm{U}3^{-}\ \mathrm{O}5^{-}\ \mathrm{U}6^{-}\ \mathrm{U}5^{-}\ \mathrm{O}6^{-}\ \mathrm{U}4^{+}\ \mathrm{U}2^{+}$ & $\mathrm{U}6^{-}\ \mathrm{O}6^{-}$ & $1$ & $1$ & $1$ \\ \hline
$6.14843$ & $\mathrm{O}1^{-}\ \mathrm{O}2^{+}\ \mathrm{U}1^{-}\ \mathrm{O}3^{-}\ \mathrm{O}4^{+}\ \mathrm{U}3^{-}\ \mathrm{O}5^{+}\ \mathrm{U}6^{+}\ \mathrm{U}5^{+}\ \mathrm{O}6^{+}\ \mathrm{U}4^{+}\ \mathrm{U}2^{+}$ & $\mathrm{U}6^{+}\ \mathrm{O}6^{+}$ & $1$ & $1$ & $1$ \\ \hline
$6.14844$ & $\mathrm{O}1^{-}\ \mathrm{O}2^{+}\ \mathrm{U}1^{-}\ \mathrm{O}3^{+}\ \mathrm{O}4^{-}\ \mathrm{U}3^{+}\ \mathrm{O}5^{-}\ \mathrm{U}6^{-}\ \mathrm{U}5^{-}\ \mathrm{O}6^{-}\ \mathrm{U}4^{-}\ \mathrm{U}2^{+}$ & $\mathrm{U}6^{-}\ \mathrm{O}6^{-}$ & $1$ & $1$ & $1$ \\ \hline
$6.14847$ & $\mathrm{O}1^{-}\ \mathrm{O}2^{+}\ \mathrm{U}1^{-}\ \mathrm{O}3^{+}\ \mathrm{O}4^{-}\ \mathrm{U}3^{+}\ \mathrm{O}5^{+}\ \mathrm{U}6^{+}\ \mathrm{U}5^{+}\ \mathrm{O}6^{+}\ \mathrm{U}4^{-}\ \mathrm{U}2^{+}$ & $\mathrm{U}6^{+}\ \mathrm{O}6^{+}$ & $1$ & $1$ & $1$ \\ \hline
$6.15981$ & $\mathrm{O}1^{-}\ \mathrm{O}2^{-}\ \mathrm{U}1^{-}\ \mathrm{O}3^{-}\ \mathrm{O}4^{+}\ \mathrm{U}3^{-}\ \mathrm{U}5^{-}\ \mathrm{O}6^{-}\ \mathrm{O}5^{-}\ \mathrm{U}6^{-}\ \mathrm{U}4^{+}\ \mathrm{U}2^{-}$ & $\mathrm{O}6^{-}\ \mathrm{U}6^{-}$ & $1$ & $[1,2]$ & $\leq 2$ \\ \hline
$6.15984$ & $\mathrm{O}1^{-}\ \mathrm{O}2^{-}\ \mathrm{U}1^{-}\ \mathrm{O}3^{-}\ \mathrm{O}4^{+}\ \mathrm{U}3^{-}\ \mathrm{U}5^{+}\ \mathrm{O}6^{+}\ \mathrm{O}5^{+}\ \mathrm{U}6^{+}\ \mathrm{U}4^{+}\ \mathrm{U}2^{-}$ & $\mathrm{O}6^{+}\ \mathrm{U}6^{+}$ & $1$ & $1$ & $\leq 2$ \\ \hline
$6.15985$ & $\mathrm{O}1^{-}\ \mathrm{O}2^{-}\ \mathrm{U}1^{-}\ \mathrm{O}3^{+}\ \mathrm{O}4^{-}\ \mathrm{U}3^{+}\ \mathrm{U}5^{-}\ \mathrm{O}6^{-}\ \mathrm{O}5^{-}\ \mathrm{U}6^{-}\ \mathrm{U}4^{-}\ \mathrm{U}2^{-}$ & $\mathrm{O}6^{-}\ \mathrm{U}6^{-}$ & $1$ & $[1,2]$ & $\leq 2$ \\ \hline
$6.15988$ & $\mathrm{O}1^{-}\ \mathrm{O}2^{-}\ \mathrm{U}1^{-}\ \mathrm{O}3^{+}\ \mathrm{O}4^{-}\ \mathrm{U}3^{+}\ \mathrm{U}5^{+}\ \mathrm{O}6^{+}\ \mathrm{O}5^{+}\ \mathrm{U}6^{+}\ \mathrm{U}4^{-}\ \mathrm{U}2^{-}$ & $\mathrm{O}6^{+}\ \mathrm{U}6^{+}$ & $1$ & $1$ & $\leq 2$ \\ \hline
$6.15997$ & $\mathrm{O}1^{-}\ \mathrm{O}2^{+}\ \mathrm{U}1^{-}\ \mathrm{O}3^{-}\ \mathrm{O}4^{+}\ \mathrm{U}3^{-}\ \mathrm{U}5^{-}\ \mathrm{O}6^{-}\ \mathrm{O}5^{-}\ \mathrm{U}6^{-}\ \mathrm{U}4^{+}\ \mathrm{U}2^{+}$ & $\mathrm{O}6^{-}\ \mathrm{U}6^{-}$ & $1$ & $1$ & $1$ \\ \hline
$6.16000$ & $\mathrm{O}1^{-}\ \mathrm{O}2^{+}\ \mathrm{U}1^{-}\ \mathrm{O}3^{-}\ \mathrm{O}4^{+}\ \mathrm{U}3^{-}\ \mathrm{U}5^{+}\ \mathrm{O}6^{+}\ \mathrm{O}5^{+}\ \mathrm{U}6^{+}\ \mathrm{U}4^{+}\ \mathrm{U}2^{+}$ & $\mathrm{O}6^{+}\ \mathrm{U}6^{+}$ & $1$ & $1$ & $1$ \\ \hline
$6.16001$ & $\mathrm{O}1^{-}\ \mathrm{O}2^{+}\ \mathrm{U}1^{-}\ \mathrm{O}3^{+}\ \mathrm{O}4^{-}\ \mathrm{U}3^{+}\ \mathrm{U}5^{-}\ \mathrm{O}6^{-}\ \mathrm{O}5^{-}\ \mathrm{U}6^{-}\ \mathrm{U}4^{-}\ \mathrm{U}2^{+}$ & $\mathrm{O}6^{-}\ \mathrm{U}6^{-}$ & $1$ & $1$ & $1$ \\ \hline
$6.16004$ & $\mathrm{O}1^{-}\ \mathrm{O}2^{+}\ \mathrm{U}1^{-}\ \mathrm{O}3^{+}\ \mathrm{O}4^{-}\ \mathrm{U}3^{+}\ \mathrm{U}5^{+}\ \mathrm{O}6^{+}\ \mathrm{O}5^{+}\ \mathrm{U}6^{+}\ \mathrm{U}4^{-}\ \mathrm{U}2^{+}$ & $\mathrm{O}6^{+}\ \mathrm{U}6^{+}$ & $1$ & $1$ & $1$ \\ \hline
$6.22566$ & $\mathrm{O}1^{-}\ \mathrm{O}2^{-}\ \mathrm{U}1^{-}\ \mathrm{O}3^{-}\ \mathrm{O}4^{-}\ \mathrm{O}5^{-}\ \mathrm{U}6^{-}\ \mathrm{U}5^{-}\ \mathrm{O}6^{-}\ \mathrm{U}4^{-}\ \mathrm{U}3^{-}\ \mathrm{U}2^{-}$ & $\mathrm{U}6^{-}\ \mathrm{O}6^{-}$ & $1$ & $2$ & $2$ \\ \hline
$6.22567$ & $\mathrm{O}1^{-}\ \mathrm{O}2^{-}\ \mathrm{U}1^{-}\ \mathrm{O}3^{-}\ \mathrm{O}4^{-}\ \mathrm{O}5^{+}\ \mathrm{U}6^{-}\ \mathrm{U}5^{+}\ \mathrm{O}6^{-}\ \mathrm{U}4^{-}\ \mathrm{U}3^{-}\ \mathrm{U}2^{-}$ & $\mathrm{U}6^{-}\ \mathrm{O}6^{-}$ & $1$ & $1$ & $1$ \\ \hline
$6.22568$ & $\mathrm{O}1^{-}\ \mathrm{O}2^{-}\ \mathrm{U}1^{-}\ \mathrm{O}3^{-}\ \mathrm{O}4^{-}\ \mathrm{O}5^{+}\ \mathrm{U}6^{+}\ \mathrm{U}5^{+}\ \mathrm{O}6^{+}\ \mathrm{U}4^{-}\ \mathrm{U}3^{-}\ \mathrm{U}2^{-}$ & $\mathrm{U}6^{+}\ \mathrm{O}6^{+}$ & $1$ & $[1,2]$ & $\leq 2$ \\ \hline
$6.22569$ & $\mathrm{O}1^{-}\ \mathrm{O}2^{-}\ \mathrm{U}1^{-}\ \mathrm{O}3^{+}\ \mathrm{O}4^{+}\ \mathrm{O}5^{-}\ \mathrm{U}6^{-}\ \mathrm{U}5^{-}\ \mathrm{O}6^{-}\ \mathrm{U}4^{+}\ \mathrm{U}3^{+}\ \mathrm{U}2^{-}$ & $\mathrm{U}6^{-}\ \mathrm{O}6^{-}$ & $1$ & $[1,2]$ & $\leq 2$ \\ \hline
$6.22570$ & $\mathrm{O}1^{-}\ \mathrm{O}2^{-}\ \mathrm{U}1^{-}\ \mathrm{O}3^{+}\ \mathrm{O}4^{+}\ \mathrm{O}5^{-}\ \mathrm{U}6^{+}\ \mathrm{U}5^{-}\ \mathrm{O}6^{+}\ \mathrm{U}4^{+}\ \mathrm{U}3^{+}\ \mathrm{U}2^{-}$ & $\mathrm{U}6^{+}\ \mathrm{O}6^{+}$ & $1$ & $1$ & $1$ \\ \hline
$6.25946$ & $\mathrm{O}1^{-}\ \mathrm{O}2^{-}\ \mathrm{U}1^{-}\ \mathrm{O}3^{-}\ \mathrm{O}4^{+}\ \mathrm{U}5^{+}\ \mathrm{U}4^{+}\ \mathrm{O}5^{+}\ \mathrm{O}6^{+}\ \mathrm{U}3^{-}\ \mathrm{U}6^{+}\ \mathrm{U}2^{-}$ & $\mathrm{U}5^{+}\ \mathrm{O}5^{+}\ \mathrm{O}6^{+}\ \mathrm{U}6^{+}$ & $1$ & $[1,2]$ & $\leq 2$ \\ \hline
$6.25947$ & $\mathrm{O}1^{-}\ \mathrm{O}2^{-}\ \mathrm{U}1^{-}\ \mathrm{O}3^{+}\ \mathrm{O}4^{-}\ \mathrm{U}5^{-}\ \mathrm{U}4^{-}\ \mathrm{O}5^{-}\ \mathrm{O}6^{-}\ \mathrm{U}3^{+}\ \mathrm{U}6^{-}\ \mathrm{U}2^{-}$ & $\mathrm{U}5^{-}\ \mathrm{O}5^{-}\ \mathrm{O}6^{-}\ \mathrm{U}6^{-}$ & $1$ & $[1,2]$ & $\leq 2$ \\ \hline
$6.25953$ & $\mathrm{O}1^{-}\ \mathrm{O}2^{-}\ \mathrm{U}1^{-}\ \mathrm{O}3^{+}\ \mathrm{O}4^{+}\ \mathrm{U}5^{+}\ \mathrm{U}4^{+}\ \mathrm{O}5^{+}\ \mathrm{O}6^{-}\ \mathrm{U}3^{+}\ \mathrm{U}6^{-}\ \mathrm{U}2^{-}$ & $\mathrm{U}5^{+}\ \mathrm{O}5^{+}\ \mathrm{O}6^{-}\ \mathrm{U}6^{-}$ & $1$ & $[1,2]$ & $\leq 2$ \\ \hline
$6.25956$ & $\mathrm{O}1^{-}\ \mathrm{O}2^{+}\ \mathrm{U}1^{-}\ \mathrm{O}3^{-}\ \mathrm{O}4^{-}\ \mathrm{U}5^{-}\ \mathrm{U}4^{-}\ \mathrm{O}5^{-}\ \mathrm{O}6^{+}\ \mathrm{U}3^{-}\ \mathrm{U}6^{+}\ \mathrm{U}2^{+}$ & $\mathrm{U}5^{-}\ \mathrm{O}5^{-}\ \mathrm{O}6^{+}\ \mathrm{U}6^{+}$ & $1$ & $1$ & $1$ \\ \hline
$6.25962$ & $\mathrm{O}1^{-}\ \mathrm{O}2^{+}\ \mathrm{U}1^{-}\ \mathrm{O}3^{-}\ \mathrm{O}4^{+}\ \mathrm{U}5^{+}\ \mathrm{U}4^{+}\ \mathrm{O}5^{+}\ \mathrm{O}6^{+}\ \mathrm{U}3^{-}\ \mathrm{U}6^{+}\ \mathrm{U}2^{+}$ & $\mathrm{U}5^{+}\ \mathrm{O}5^{+}\ \mathrm{O}6^{+}\ \mathrm{U}6^{+}$ & $1$ & $1$ & $1$ \\ \hline
$6.25963$ & $\mathrm{O}1^{-}\ \mathrm{O}2^{+}\ \mathrm{U}1^{-}\ \mathrm{O}3^{+}\ \mathrm{O}4^{-}\ \mathrm{U}5^{-}\ \mathrm{U}4^{-}\ \mathrm{O}5^{-}\ \mathrm{O}6^{-}\ \mathrm{U}3^{+}\ \mathrm{U}6^{-}\ \mathrm{U}2^{+}$ & $\mathrm{U}5^{-}\ \mathrm{O}5^{-}\ \mathrm{O}6^{-}\ \mathrm{U}6^{-}$ & $1$ & $1$ & $1$ \\ \hline
$6.25969$ & $\mathrm{O}1^{-}\ \mathrm{O}2^{+}\ \mathrm{U}1^{-}\ \mathrm{O}3^{+}\ \mathrm{O}4^{+}\ \mathrm{U}5^{+}\ \mathrm{U}4^{+}\ \mathrm{O}5^{+}\ \mathrm{O}6^{-}\ \mathrm{U}3^{+}\ \mathrm{U}6^{-}\ \mathrm{U}2^{+}$ & $\mathrm{U}5^{+}\ \mathrm{O}5^{+}\ \mathrm{O}6^{-}\ \mathrm{U}6^{-}$ & $1$ & $1$ & $1$ \\ \hline
$6.28589$ & $\mathrm{O}1^{-}\ \mathrm{O}2^{-}\ \mathrm{U}1^{-}\ \mathrm{O}3^{-}\ \mathrm{O}4^{-}\ \mathrm{U}5^{-}\ \mathrm{O}6^{-}\ \mathrm{O}5^{-}\ \mathrm{U}6^{-}\ \mathrm{U}4^{-}\ \mathrm{U}3^{-}\ \mathrm{U}2^{-}$ & $\mathrm{O}6^{-}\ \mathrm{U}6^{-}$ & $1$ & $2$ & $2$ \\ \hline
$6.28590$ & $\mathrm{O}1^{-}\ \mathrm{O}2^{-}\ \mathrm{U}1^{-}\ \mathrm{O}3^{-}\ \mathrm{O}4^{-}\ \mathrm{U}5^{-}\ \mathrm{O}6^{+}\ \mathrm{O}5^{-}\ \mathrm{U}6^{+}\ \mathrm{U}4^{-}\ \mathrm{U}3^{-}\ \mathrm{U}2^{-}$ & $\mathrm{O}6^{+}\ \mathrm{U}6^{+}$ & $1$ & $1$ & $1$ \\ \hline
$6.28591$ & $\mathrm{O}1^{-}\ \mathrm{O}2^{-}\ \mathrm{U}1^{-}\ \mathrm{O}3^{-}\ \mathrm{O}4^{-}\ \mathrm{U}5^{+}\ \mathrm{O}6^{-}\ \mathrm{O}5^{+}\ \mathrm{U}6^{-}\ \mathrm{U}4^{-}\ \mathrm{U}3^{-}\ \mathrm{U}2^{-}$ & $\mathrm{O}6^{-}\ \mathrm{U}6^{-}$ & $1$ & $1$ & $1$ \\ \hline
$6.28592$ & $\mathrm{O}1^{-}\ \mathrm{O}2^{-}\ \mathrm{U}1^{-}\ \mathrm{O}3^{-}\ \mathrm{O}4^{-}\ \mathrm{U}5^{+}\ \mathrm{O}6^{+}\ \mathrm{O}5^{+}\ \mathrm{U}6^{+}\ \mathrm{U}4^{-}\ \mathrm{U}3^{-}\ \mathrm{U}2^{-}$ & $\mathrm{O}6^{+}\ \mathrm{U}6^{+}$ & $1$ & $1$ & $\leq 2$ \\ \hline
$6.28593$ & $\mathrm{O}1^{-}\ \mathrm{O}2^{-}\ \mathrm{U}1^{-}\ \mathrm{O}3^{+}\ \mathrm{O}4^{+}\ \mathrm{U}5^{-}\ \mathrm{O}6^{-}\ \mathrm{O}5^{-}\ \mathrm{U}6^{-}\ \mathrm{U}4^{+}\ \mathrm{U}3^{+}\ \mathrm{U}2^{-}$ & $\mathrm{O}6^{-}\ \mathrm{U}6^{-}$ & $1$ & $[1,2]$ & $\leq 2$ \\ \hline
$6.28594$ & $\mathrm{O}1^{-}\ \mathrm{O}2^{-}\ \mathrm{U}1^{-}\ \mathrm{O}3^{+}\ \mathrm{O}4^{+}\ \mathrm{U}5^{-}\ \mathrm{O}6^{+}\ \mathrm{O}5^{-}\ \mathrm{U}6^{+}\ \mathrm{U}4^{+}\ \mathrm{U}3^{+}\ \mathrm{U}2^{-}$ & $\mathrm{O}6^{+}\ \mathrm{U}6^{+}$ & $1$ & $1$ & $1$ \\ \hline
$6.28595$ & $\mathrm{O}1^{-}\ \mathrm{O}2^{-}\ \mathrm{U}1^{-}\ \mathrm{O}3^{+}\ \mathrm{O}4^{+}\ \mathrm{U}5^{+}\ \mathrm{O}6^{-}\ \mathrm{O}5^{+}\ \mathrm{U}6^{-}\ \mathrm{U}4^{+}\ \mathrm{U}3^{+}\ \mathrm{U}2^{-}$ & $\mathrm{O}6^{-}\ \mathrm{U}6^{-}$ & $1$ & $1$ & $1$ \\ \hline
$6.33354$ & $\mathrm{O}1^{-}\ \mathrm{O}2^{+}\ \mathrm{U}1^{-}\ \mathrm{O}3^{-}\ \mathrm{U}4^{-}\ \mathrm{U}3^{-}\ \mathrm{O}4^{-}\ \mathrm{U}2^{+}\ \mathrm{O}5^{-}\ \mathrm{U}6^{-}\ \mathrm{U}5^{-}\ \mathrm{O}6^{-}$ & $\mathrm{U}4^{-}\ \mathrm{O}4^{-}\ \mathrm{U}6^{-}\ \mathrm{O}6^{-}$ & $1$ & $[1,2]$ & $\leq 2$ \\ \hline
$6.33355$ & $\mathrm{O}1^{-}\ \mathrm{O}2^{+}\ \mathrm{U}1^{-}\ \mathrm{O}3^{-}\ \mathrm{U}4^{-}\ \mathrm{U}3^{-}\ \mathrm{O}4^{-}\ \mathrm{U}2^{+}\ \mathrm{O}5^{-}\ \mathrm{U}6^{+}\ \mathrm{U}5^{-}\ \mathrm{O}6^{+}$ & $\mathrm{U}4^{-}\ \mathrm{O}4^{-}\ \mathrm{U}6^{+}\ \mathrm{O}6^{+}$ & $1$ & $1$ & $1$ \\ \hline
$6.33356$ & $\mathrm{O}1^{-}\ \mathrm{O}2^{+}\ \mathrm{U}1^{-}\ \mathrm{O}3^{-}\ \mathrm{U}4^{-}\ \mathrm{U}3^{-}\ \mathrm{O}4^{-}\ \mathrm{U}2^{+}\ \mathrm{O}5^{+}\ \mathrm{U}6^{-}\ \mathrm{U}5^{+}\ \mathrm{O}6^{-}$ & $\mathrm{U}4^{-}\ \mathrm{O}4^{-}\ \mathrm{U}6^{-}\ \mathrm{O}6^{-}$ & $1$ & $1$ & $1$ \\ \hline
$6.33357$ & $\mathrm{O}1^{-}\ \mathrm{O}2^{+}\ \mathrm{U}1^{-}\ \mathrm{O}3^{-}\ \mathrm{U}4^{-}\ \mathrm{U}3^{-}\ \mathrm{O}4^{-}\ \mathrm{U}2^{+}\ \mathrm{O}5^{+}\ \mathrm{U}6^{+}\ \mathrm{U}5^{+}\ \mathrm{O}6^{+}$ & $\mathrm{U}4^{-}\ \mathrm{O}4^{-}\ \mathrm{U}6^{+}\ \mathrm{O}6^{+}$ & $1$ & $1$ & $\leq 2$ \\ \hline
$6.33366$ & $\mathrm{O}1^{-}\ \mathrm{O}2^{+}\ \mathrm{U}1^{-}\ \mathrm{O}3^{+}\ \mathrm{U}4^{+}\ \mathrm{U}3^{+}\ \mathrm{O}4^{+}\ \mathrm{U}2^{+}\ \mathrm{O}5^{-}\ \mathrm{U}6^{-}\ \mathrm{U}5^{-}\ \mathrm{O}6^{-}$ & $\mathrm{U}4^{+}\ \mathrm{O}4^{+}\ \mathrm{U}6^{-}\ \mathrm{O}6^{-}$ & $1$ & $1$ & $\leq 2$ \\ \hline
$6.33367$ & $\mathrm{O}1^{-}\ \mathrm{O}2^{+}\ \mathrm{U}1^{-}\ \mathrm{O}3^{+}\ \mathrm{U}4^{+}\ \mathrm{U}3^{+}\ \mathrm{O}4^{+}\ \mathrm{U}2^{+}\ \mathrm{O}5^{-}\ \mathrm{U}6^{+}\ \mathrm{U}5^{-}\ \mathrm{O}6^{+}$ & $\mathrm{U}4^{+}\ \mathrm{O}4^{+}\ \mathrm{U}6^{+}\ \mathrm{O}6^{+}$ & $1$ & $1$ & $1$ \\ \hline
$6.33368$ & $\mathrm{O}1^{-}\ \mathrm{O}2^{+}\ \mathrm{U}1^{-}\ \mathrm{O}3^{+}\ \mathrm{U}4^{+}\ \mathrm{U}3^{+}\ \mathrm{O}4^{+}\ \mathrm{U}2^{+}\ \mathrm{O}5^{+}\ \mathrm{U}6^{-}\ \mathrm{U}5^{+}\ \mathrm{O}6^{-}$ & $\mathrm{U}4^{+}\ \mathrm{O}4^{+}\ \mathrm{U}6^{-}\ \mathrm{O}6^{-}$ & $1$ & $1$ & $1$ \\ \hline
$6.33369$ & $\mathrm{O}1^{-}\ \mathrm{O}2^{+}\ \mathrm{U}1^{-}\ \mathrm{O}3^{+}\ \mathrm{U}4^{+}\ \mathrm{U}3^{+}\ \mathrm{O}4^{+}\ \mathrm{U}2^{+}\ \mathrm{O}5^{+}\ \mathrm{U}6^{+}\ \mathrm{U}5^{+}\ \mathrm{O}6^{+}$ & $\mathrm{U}4^{+}\ \mathrm{O}4^{+}\ \mathrm{U}6^{+}\ \mathrm{O}6^{+}$ & $1$ & $[1,2]$ & $\leq 2$ \\ \hline
$6.42238$ & $\mathrm{O}1^{-}\ \mathrm{O}2^{-}\ \mathrm{U}1^{-}\ \mathrm{O}3^{-}\ \mathrm{U}4^{-}\ \mathrm{O}5^{-}\ \mathrm{U}6^{-}\ \mathrm{U}5^{-}\ \mathrm{O}6^{-}\ \mathrm{O}4^{-}\ \mathrm{U}3^{-}\ \mathrm{U}2^{-}$ & $\mathrm{U}6^{-}\ \mathrm{O}6^{-}$ & $2$ & $2$ & $2$ \\ \hline
$6.42239$ & $\mathrm{O}1^{-}\ \mathrm{O}2^{-}\ \mathrm{U}1^{-}\ \mathrm{O}3^{-}\ \mathrm{U}4^{-}\ \mathrm{O}5^{-}\ \mathrm{U}6^{+}\ \mathrm{U}5^{-}\ \mathrm{O}6^{+}\ \mathrm{O}4^{-}\ \mathrm{U}3^{-}\ \mathrm{U}2^{-}$ & $\mathrm{U}6^{+}\ \mathrm{O}6^{+}$ & $2$ & $1$ & $1$ \\ \hline
$6.42240$ & $\mathrm{O}1^{-}\ \mathrm{O}2^{-}\ \mathrm{U}1^{-}\ \mathrm{O}3^{-}\ \mathrm{U}4^{-}\ \mathrm{O}5^{+}\ \mathrm{U}6^{+}\ \mathrm{U}5^{+}\ \mathrm{O}6^{+}\ \mathrm{O}4^{-}\ \mathrm{U}3^{-}\ \mathrm{U}2^{-}$ & $\mathrm{U}6^{+}\ \mathrm{O}6^{+}$ & $2$ & $[1,2]$ & $\leq 2$ \\ \hline
$6.42241$ & $\mathrm{O}1^{-}\ \mathrm{O}2^{-}\ \mathrm{U}1^{-}\ \mathrm{O}3^{-}\ \mathrm{U}4^{+}\ \mathrm{O}5^{-}\ \mathrm{U}6^{-}\ \mathrm{U}5^{-}\ \mathrm{O}6^{-}\ \mathrm{O}4^{+}\ \mathrm{U}3^{-}\ \mathrm{U}2^{-}$ & $\mathrm{U}6^{-}\ \mathrm{O}6^{-}$ & $2$ & $[1,2]$ & $\leq 2$ \\ \hline
$6.42242$ & $\mathrm{O}1^{-}\ \mathrm{O}2^{-}\ \mathrm{U}1^{-}\ \mathrm{O}3^{-}\ \mathrm{U}4^{+}\ \mathrm{O}5^{+}\ \mathrm{U}6^{-}\ \mathrm{U}5^{+}\ \mathrm{O}6^{-}\ \mathrm{O}4^{+}\ \mathrm{U}3^{-}\ \mathrm{U}2^{-}$ & $\mathrm{U}6^{-}\ \mathrm{O}6^{-}$ & $2$ & $1$ & $1$ \\ \hline
$6.42243$ & $\mathrm{O}1^{-}\ \mathrm{O}2^{-}\ \mathrm{U}1^{-}\ \mathrm{O}3^{-}\ \mathrm{U}4^{+}\ \mathrm{O}5^{+}\ \mathrm{U}6^{+}\ \mathrm{U}5^{+}\ \mathrm{O}6^{+}\ \mathrm{O}4^{+}\ \mathrm{U}3^{-}\ \mathrm{U}2^{-}$ & $\mathrm{U}6^{+}\ \mathrm{O}6^{+}$ & $2$ & $[1,2]$ & $\leq 2$ \\ \hline
$6.42244$ & $\mathrm{O}1^{-}\ \mathrm{O}2^{-}\ \mathrm{U}1^{-}\ \mathrm{O}3^{+}\ \mathrm{U}4^{-}\ \mathrm{O}5^{-}\ \mathrm{U}6^{+}\ \mathrm{U}5^{-}\ \mathrm{O}6^{+}\ \mathrm{O}4^{-}\ \mathrm{U}3^{+}\ \mathrm{U}2^{-}$ & $\mathrm{U}6^{+}\ \mathrm{O}6^{+}$ & $2$ & $1$ & $1$ \\ \hline
$6.42245$ & $\mathrm{O}1^{-}\ \mathrm{O}2^{-}\ \mathrm{U}1^{-}\ \mathrm{O}3^{+}\ \mathrm{U}4^{-}\ \mathrm{O}5^{+}\ \mathrm{U}6^{+}\ \mathrm{U}5^{+}\ \mathrm{O}6^{+}\ \mathrm{O}4^{-}\ \mathrm{U}3^{+}\ \mathrm{U}2^{-}$ & $\mathrm{U}6^{+}\ \mathrm{O}6^{+}$ & $2$ & $[1,2]$ & $\leq 2$ \\ \hline
$6.42246$ & $\mathrm{O}1^{-}\ \mathrm{O}2^{-}\ \mathrm{U}1^{-}\ \mathrm{O}3^{+}\ \mathrm{U}4^{+}\ \mathrm{O}5^{-}\ \mathrm{U}6^{-}\ \mathrm{U}5^{-}\ \mathrm{O}6^{-}\ \mathrm{O}4^{+}\ \mathrm{U}3^{+}\ \mathrm{U}2^{-}$ & $\mathrm{U}6^{-}\ \mathrm{O}6^{-}$ & $2$ & $[1,2]$ & $\leq 2$ \\ \hline
$6.42247$ & $\mathrm{O}1^{-}\ \mathrm{O}2^{-}\ \mathrm{U}1^{-}\ \mathrm{O}3^{+}\ \mathrm{U}4^{+}\ \mathrm{O}5^{+}\ \mathrm{U}6^{-}\ \mathrm{U}5^{+}\ \mathrm{O}6^{-}\ \mathrm{O}4^{+}\ \mathrm{U}3^{+}\ \mathrm{U}2^{-}$ & $\mathrm{U}6^{-}\ \mathrm{O}6^{-}$ & $2$ & $1$ & $1$ \\ \hline
$6.46963$ & $\mathrm{O}1^{-}\ \mathrm{O}2^{-}\ \mathrm{U}1^{-}\ \mathrm{O}3^{-}\ \mathrm{U}4^{-}\ \mathrm{U}5^{-}\ \mathrm{O}6^{-}\ \mathrm{O}5^{-}\ \mathrm{U}6^{-}\ \mathrm{O}4^{-}\ \mathrm{U}3^{-}\ \mathrm{U}2^{-}$ & $\mathrm{O}6^{-}\ \mathrm{U}6^{-}$ & $2$ & $2$ & $2$ \\ \hline
$6.46964$ & $\mathrm{O}1^{-}\ \mathrm{O}2^{-}\ \mathrm{U}1^{-}\ \mathrm{O}3^{-}\ \mathrm{U}4^{-}\ \mathrm{U}5^{+}\ \mathrm{O}6^{-}\ \mathrm{O}5^{+}\ \mathrm{U}6^{-}\ \mathrm{O}4^{-}\ \mathrm{U}3^{-}\ \mathrm{U}2^{-}$ & $\mathrm{O}6^{-}\ \mathrm{U}6^{-}$ & $2$ & $1$ & $1$ \\ \hline
$6.46965$ & $\mathrm{O}1^{-}\ \mathrm{O}2^{-}\ \mathrm{U}1^{-}\ \mathrm{O}3^{-}\ \mathrm{U}4^{-}\ \mathrm{U}5^{+}\ \mathrm{O}6^{+}\ \mathrm{O}5^{+}\ \mathrm{U}6^{+}\ \mathrm{O}4^{-}\ \mathrm{U}3^{-}\ \mathrm{U}2^{-}$ & $\mathrm{O}6^{+}\ \mathrm{U}6^{+}$ & $2$ & $1$ & $\leq 2$ \\ \hline
$6.46966$ & $\mathrm{O}1^{-}\ \mathrm{O}2^{-}\ \mathrm{U}1^{-}\ \mathrm{O}3^{-}\ \mathrm{U}4^{+}\ \mathrm{U}5^{-}\ \mathrm{O}6^{-}\ \mathrm{O}5^{-}\ \mathrm{U}6^{-}\ \mathrm{O}4^{+}\ \mathrm{U}3^{-}\ \mathrm{U}2^{-}$ & $\mathrm{O}6^{-}\ \mathrm{U}6^{-}$ & $2$ & $[1,2]$ & $\leq 2$ \\ \hline
$6.46967$ & $\mathrm{O}1^{-}\ \mathrm{O}2^{-}\ \mathrm{U}1^{-}\ \mathrm{O}3^{-}\ \mathrm{U}4^{+}\ \mathrm{U}5^{-}\ \mathrm{O}6^{+}\ \mathrm{O}5^{-}\ \mathrm{U}6^{+}\ \mathrm{O}4^{+}\ \mathrm{U}3^{-}\ \mathrm{U}2^{-}$ & $\mathrm{O}6^{+}\ \mathrm{U}6^{+}$ & $2$ & $1$ & $1$ \\ \hline
$6.46968$ & $\mathrm{O}1^{-}\ \mathrm{O}2^{-}\ \mathrm{U}1^{-}\ \mathrm{O}3^{-}\ \mathrm{U}4^{+}\ \mathrm{U}5^{+}\ \mathrm{O}6^{+}\ \mathrm{O}5^{+}\ \mathrm{U}6^{+}\ \mathrm{O}4^{+}\ \mathrm{U}3^{-}\ \mathrm{U}2^{-}$ & $\mathrm{O}6^{+}\ \mathrm{U}6^{+}$ & $2$ & $1$ & $\leq 2$ \\ \hline
$6.46969$ & $\mathrm{O}1^{-}\ \mathrm{O}2^{-}\ \mathrm{U}1^{-}\ \mathrm{O}3^{+}\ \mathrm{U}4^{-}\ \mathrm{U}5^{+}\ \mathrm{O}6^{-}\ \mathrm{O}5^{+}\ \mathrm{U}6^{-}\ \mathrm{O}4^{-}\ \mathrm{U}3^{+}\ \mathrm{U}2^{-}$ & $\mathrm{O}6^{-}\ \mathrm{U}6^{-}$ & $2$ & $1$ & $1$ \\ \hline
$6.46970$ & $\mathrm{O}1^{-}\ \mathrm{O}2^{-}\ \mathrm{U}1^{-}\ \mathrm{O}3^{+}\ \mathrm{U}4^{-}\ \mathrm{U}5^{+}\ \mathrm{O}6^{+}\ \mathrm{O}5^{+}\ \mathrm{U}6^{+}\ \mathrm{O}4^{-}\ \mathrm{U}3^{+}\ \mathrm{U}2^{-}$ & $\mathrm{O}6^{+}\ \mathrm{U}6^{+}$ & $2$ & $1$ & $\leq 2$ \\ \hline
$6.46971$ & $\mathrm{O}1^{-}\ \mathrm{O}2^{-}\ \mathrm{U}1^{-}\ \mathrm{O}3^{+}\ \mathrm{U}4^{+}\ \mathrm{U}5^{-}\ \mathrm{O}6^{-}\ \mathrm{O}5^{-}\ \mathrm{U}6^{-}\ \mathrm{O}4^{+}\ \mathrm{U}3^{+}\ \mathrm{U}2^{-}$ & $\mathrm{O}6^{-}\ \mathrm{U}6^{-}$ & $2$ & $[1,2]$ & $\leq 2$ \\ \hline
$6.46972$ & $\mathrm{O}1^{-}\ \mathrm{O}2^{-}\ \mathrm{U}1^{-}\ \mathrm{O}3^{+}\ \mathrm{U}4^{+}\ \mathrm{U}5^{-}\ \mathrm{O}6^{+}\ \mathrm{O}5^{-}\ \mathrm{U}6^{+}\ \mathrm{O}4^{+}\ \mathrm{U}3^{+}\ \mathrm{U}2^{-}$ & $\mathrm{O}6^{+}\ \mathrm{U}6^{+}$ & $2$ & $1$ & $1$ \\ \hline
\end{longtable}
}
\section{Tool used}
AI assistance was strictly limited to coding syntax for the appendix table.
\bibliography{ref}
\bibliographystyle{abbrv}

\end{document}